\theoremstyle{plain}
\newtheorem*{lem}{Lemma}
\newtheorem*{prop}{Proposition}
\newtheorem*{cor}{Corollary}
\newtheorem*{thm}{Theorem}
\theoremstyle{Definition}
\newtheorem*{defn}{Definition}
\newcommand{\bdm}{\begin{displaymath}}
\newcommand{\edm}{\end{displaymath}}
\newcommand{\Id}{\mathrm{Id}}
\newcommand{\Ima}{\mathrm{Im}}
\newcommand{\Irr}{\mathrm{Irr}}
\newcommand{\Z}{\mathbb{Z}}
\newcommand{\C}{\mathbb{C}}
\newcommand{\blambda}{\boldsymbol{\lambda}}
\newcommand{\bmu}{\boldsymbol{\mu}}
\renewcommand{\leq}{\leqslant}
\renewcommand{\geq}{\geqslant}
\newcommand{\gr}{\mathrm{gr}}
\newcommand{\rat}{\overline{H}_{c}}
\newcommand{\Res}{\mathrm{Res}}
\newcommand{\x}{\scriptstyle{1}}
\newcommand{\xx}{\scriptstyle{2}}
\newcommand{\xxx}{\scriptstyle{3}}
\newcommand{\xxxx}{\scriptstyle{4}}
\newcommand{\xxxxx}{\scriptstyle{5}}
\newcommand{\xxxxxx}{\scriptstyle{6}}
\newcommand{\xxxxxxx}{\scriptstyle{7}}
\newcommand{\xxxxxxxx}{\scriptstyle{8}}
\newcommand{\xxxxxxxxx}{\scriptstyle{9}}
\newcommand{\xxxxxxxxxx}{\scriptstyle{10}}
\let\mc\mathcal
\newcommand{\ZZ}{\mathbb{Z}}
\title{Blocks of restricted rational Cherednik algebras for $G(m,d,n)$}
\author{Maurizio Martino}
\address{Mathematisches Institut, Universit\"at Bonn, Endenicher Allee 60, Bonn 53115, Germany}
\email{mmartino@math.uni-bonn.de}
\begin{document}

\begin{abstract}
We study the Dunkl-Opdam subalgebra of the rational Cherednik
algebra for wreath products at $t=0$, and use this to describe the
block decomposition of restricted rational Cherednik algebras for $G(m,d,n)$.
\end{abstract}

\maketitle

\section{Introduction}

\subsection{} Restricted rational Cherednik algebras have been a topic of recent interest due to
connections with cells and families for cyclotomic Hecke algebras, \cite{GM} and \cite{M}.
This paper is an attempt to understand better the Cherednik side of the picture. In particular,
we extend the description of the blocks of the restricted algebras to arbitrary parameters. The
case of rational parameters was dealt with in \cite{G2} (see \cite[Theorem 2.5]{GM}), and the combinatorics reinterpreted in \cite[Theorem 3.13]{M}.
Key to our arguments is the Dunkl-Opdam subalgebra of the rational Cherednik algebra, which we study in the $t=0$ situation.
This algebra has already been used to great effect in the study of rational Cherednik algebras in the $t \neq 0$ case, see for example \cite{DGr}.

\subsection{} Let us decribe our results in a little more detail,
for notation see Sections \ref{RCA} and \ref{DOalgebra}. Let
$W=G(m,1,n)$ be the wreath product of the symmetric group with the
cyclic group of order $m$. Corresponding to tuples $(t,\kappa,c_1,
\dots c_{m-1}) \in \C^{m+1}$, Etingof and Ginzburg defined in
\cite{EG} a flat family of algebras $H_{t,c}$ called rational
Cherednik algebras. When $t=0$, these algebras, which are denoted
$H_c$, are finite-dimensional modules over their centres. There
exists a particularly interesting finite dimensional quotient
algebra $\rat$ of $H_c$ called the restricted rational Cherednik
algebra. The main focus of this paper is to determine the block
structure of $\rat$.

\subsection{}\label{mainthmstatement} The algebras $\rat$ have many interesting properties. There exist
baby Verma modules $\overline{\Delta}_c(E)$ for each irreducible
$W$-module $E$. These are indecomposable modules whose simple heads
yield all simple $\rat$-modules. It is well-known that the
irreducible $W$-modules are parametrised by $m$-multipartitions of
$n$. Given a multipartition $\blambda=(\lambda^0, \dots
,\lambda^{m-1})$, we denote by $\overline{\Delta}_c(\blambda)$ the
corresponding baby Verma module. Our main result, Theorem
\ref{wreathblocks}, is that one can use the combinatorics of
multipartitions to describe the block decomposition of $\rat$. The
numbers $a_i$ are defined in \ref{Zchar} and the polynomials
$\Res_{\lambda^i}(x)$ denote residue of $\lambda^i$, which is
defined in \ref{combinatorics}.

\begin{thm}\label{wreathblock}
Let $\blambda$ and $\bmu$ be $m$-multipartitions of $n$. The baby
Verma modules $\overline{\Delta}_c(\blambda),
\overline{\Delta}_c(\bmu)$ lie in the same block if and only if
$\sum_{i=0}^{m-1} x^{a_i} \Res_{\lambda^i}(x^{-\kappa}) =
\sum_{i=0}^{m-1} x^{a_i}\Res_{\mu^i}(x^{-\kappa})$.
\end{thm}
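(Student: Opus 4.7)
The plan is to use the Dunkl-Opdam subalgebra $\mathcal{U} \subset H_c$ of Section \ref{DOalgebra}, which contains pairwise commuting elements $z_1,\ldots,z_n$ together with $\C W$ and whose $W$-symmetric polynomials lie in the centre of $H_c$. After projection to the restricted quotient $\rat$, these symmetric polynomials still act by scalars on each indecomposable $\rat$-module, so two baby Verma modules in the same block must agree on every such scalar.

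The first step is to diagonalise $z_1,\ldots,z_n$ on $\overline{\Delta}_c(\blambda)$. Using the Young-type semisimple action of $\mathcal{U}$ developed in Section \ref{DOalgebra}, one produces a basis of $\overline{\Delta}_c(\blambda)$ indexed by standard $\blambda$-tableaux $T$ on which each $z_i$ acts by a scalar $\epsilon_i(T)$ depending only on the box containing $i$ in $T$: if $i$ sits in row $r$, column $s$ of $\lambda^j$, then $\epsilon_i(T) = a_j - \kappa(s-r)$. Summing $x^{z_i}$ over $i=1,\ldots,n$ on any such basis vector then produces the operator $\sum_{j=0}^{m-1} x^{a_j}\Res_{\lambda^j}(x^{-\kappa})$, exactly matching the generating function in the statement.

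Next, I would observe that two such generating functions coincide precisely when the multisets $\{\epsilon_i(T) : i=1,\ldots,n\}$ coincide, and that any $W$-symmetric polynomial in the $z_i$ is determined by this multiset. Hence equality of generating functions for $\blambda$ and $\bmu$ is the same as equality of central characters of $\overline{\Delta}_c(\blambda)$ and $\overline{\Delta}_c(\bmu)$ on the image of $\C[z_1,\ldots,z_n]^W$ in $Z(\rat)$. One direction of the theorem is then immediate, since baby Verma modules in a common block share a central character.

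The hardest step is the converse: showing that these Dunkl-Opdam invariants are \emph{sufficient} to separate blocks, i.e.\ that any two baby Verma modules whose generating functions agree must lie in a common block. This requires the image of $\C[z_1,\ldots,z_n]^W$ in $Z(\rat)$ to hit enough of the block algebra. I would establish this via the Satake-type identification of $Z(H_c)$ with the spherical subalgebra $e H_c e$, together with an explicit surjection of $\mathcal{U}^W$ onto the relevant spherical piece; I expect this structural statement to form the technical backbone of Section \ref{DOalgebra}. Granting it, the theorem reduces to the eigenvalue bookkeeping carried out in the previous two steps.
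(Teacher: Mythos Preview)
Your outline captures the eigenvalue computation and the easy direction correctly, and these match the paper: the symmetric polynomials in the $z_i$ are central (Theorem \ref{centralthm}), so they act by scalars on each baby Verma module, and the scalars can be read off from the action of the $z_i$ on the lowest-weight subspace $1\otimes V_{\blambda}$ via the Jucys--Murphy eigenvalues (this is \S\ref{Zchar}). A small imprecision: you speak of diagonalising the $z_i$ on \emph{all} of $\overline{\Delta}_c(\blambda)$, but the paper only needs (and only proves) the diagonalisation on $1\otimes V_{\blambda}$; since $\mathfrak{Z}_c$ is central and the module is cyclic over this subspace, that suffices.

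The real gap is in the converse, and your proposed route is not the one the paper takes. You suggest passing through the Satake isomorphism $Z(H_c)\cong eH_ce$ and then exhibiting a surjection of $\mathcal{U}^W$ onto the relevant spherical piece. No such argument appears in the paper, and you yourself leave it as an expectation rather than a proof. The paper instead argues as follows. By M\"uller's theorem, two baby Verma modules lie in the same block if and only if the \emph{full} centre $Z_c$ acts on them by the same character. Now $Z_c$ is graded (for the $\Z$-grading with $\deg x_i=-1$, $\deg y_i=1$), and any homogeneous generator of nonzero degree acts on a baby Verma module as a nilpotent operator, hence by zero. So only the degree-zero part of $Z_c$ matters. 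The paper then writes down explicit generators $P_{q,r,s}$ of $\gr Z_c = S(V\oplus V^*)^W$ using Weyl's theorem, observes that the degree-zero ones are exactly the $P_{q,r,q}$, and lifts each of these to the power sum $\sum_i z_i^{mq+r}\in\mathfrak{Z}_c$. Thus the degree-zero part of $Z_c$ is precisely $\mathfrak{Z}_c$, and the block question reduces to the $\mathfrak{Z}_c$-characters already computed. This grading-plus-lifting argument is the actual ``technical backbone'' replacing your Satake step, and it is both more elementary and more explicit than what you sketched.
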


\subsection{}
It is interesting to note the method of proof. An important role is
played by the Dunkl-Opdam subalgebra, which is generated by the
elements $z_i$ from \ref{HeckeRelations}. This is a commutative
subalgebra of $H_c$, and a crucial point is that the subalgebra
$\mathfrak{Z}_c$ of symmetric polynomials in the $z_i$ is central in
$H_c$, Theorem \ref{centralthm}. This is proved in Section
\ref{proofofcentral} by direct calculation; a proof using the theory
of generalised Jack polynomials was kindly passed on to the author
by Stephen Griffeth and we sketch this argument in \ref{Jackproof}.
It is shown in Theorem \ref{wreathblocks} that the action of
$\mathfrak{Z}_c$ on baby Verma modules determines the blocks for
$\rat$. Evaluating the eigenvalues of $\mathfrak{Z}_c$ on baby Verma
modules then yields the block description of Theorem
\ref{wreathblock}. We conclude the paper by applying the results
from \cite{B} to describe the block decomposition of restricted
Cherednik algebras for the normal subgroups $G(m,d,n)$, Theorem
\ref{normalblocks}.

\subsection{Relation to Hecke algebras and further questions}The
combinatorics governing the blocks of $\rat$ are conjectured to be
related to cells at unequal parameters, \cite{GM}, and to Rouquier
families for cyclotomic Hecke algebras, \cite{M} (where our $a_i$
are written $\overline{m}_i$). In both these cases the parameters
(more precisely the $H_i$s from \ref{wreathcherednik}) need to be
rational in order to be able to interpret the corresponding Hecke
algebras. Our results hold for all parameters, and we note here
another interpretation of the resulting combinatorics which holds
also in non-rational cases. Let us assume that $\kappa \neq 0$, so
that without loss of generality we can set $\kappa = -1$, see
(\ref{shiftiso}). Let $H_n^a$ be the degenerate cyclotomic Hecke
algebra for $S_n$ associated to the parameter $a$ from
\ref{wreathblock}, see \cite{Br}. Then the blocks of $H_n^a$ are
precisely determined by the formula in Theorem \ref{wreathblock},
\cite[Lemma 4.2]{Br}. It would be interesting to know whether $\rat$
is more closely related to $H_n^a$, or to the associated parabolic
category $\mathcal{O}$.

It is natural to ask whether analogues of the Dunkl-Opdam subalgebra
exist for the exceptional complex reflection groups, and whether the
methods presented here could be extended to those cases. These
techniques should also be applicable to studying blocks for
restricted Cherednik algebras in characteristic $p>0$ and at $t\neq
0$. We will return to these questions in future work.

\subsection{Acknowledgements}
We would like to thank Stephen Griffeth for informing us of an
alternative proof of Theorem \ref{centralthm} and for allowing us to
reproduce it in \ref{Jackproof}. We thank Gwyn Bellamy for useful
comments. This work was supported by the SFB/TR 45 ``Periods, Moduli
Spaces and Arithmetic of Algebraic Varieties" of the DFG (German
Research Foundation).

\section{Rational Cherednik algebras}\label{RCA}

\subsection{}\label{notation} Let $W$ be a complex reflection group and $V$ its
reflection representation over $\C$. Let $\mathcal{S} \subset W$
denote the set of complex reflections in $W$. For $s\in \mathcal{S}$
let $H_s \subset V$ denote the reflection hyperplane
$\mathrm{Fix}_s(V)$, and let $W_s$ denote the pointwise stabiliser
of $H_s$. For every $s \in \mathcal{S}$ we choose $v_s \in V$ such
that $\C v_s$ is a $W_s$-stable complement to $H_s$, and choose also
a linear form $\alpha_s \in V^*$ with kernel $H_s$. Let $<\ ,\ >$
denote the natural pairing of $V^*$ with $V$. \begin{comment}If $A$
is a finite dimensional algebra, then we denote by $\Irr A$ the set
of irreducible $A$-modules. We will write $\Irr W$ for
 the set $\Irr \C W$.\end{comment}

%We use the convention $\otimes = \otimes_{\C}$.

\subsection{Rational Cherednik algebras}\label{PBW} We introduce formal parameters
$\underline{t}, \underline{c}_{s}$ for $s\in \mathcal{S}$, where we set $\underline{c}_{s} = \underline{c}_{t}$ if the
conjugacy classes of $s$ and $t$ are equal. Let ${R}$ be the polynomial ring
$\C[\underline{t}, \underline{c}_{s}]_{{s} \in \mathcal{S}}$. Let $T_{R}(V \oplus V^*)$ and $S_{R}(V \oplus V^*)$ denote
the tensor and symmetric algebras, respectively, of $V \oplus V^*$ over ${R}$.

\begin{defn} The \textit{rational Cherednik algebra} $H_{{R}}$
is the quotient of the smash product $T_{R}(V \oplus
V^*)\rtimes W$ by the relations:
\begin{align*} [x,x']=0,\ [y,y']=0\ \mathrm{and}\ [y,x] = \underline{t}<x,y> - \sum_{s \in
\mathcal{S}} \underline{c}_{s}\frac{<\alpha_s,y><x,v_s>}{<\alpha_s,v_s>} s
\end{align*}
for $y,y'\in V$ and $x,x'\in V^*$.
\end{defn}

Let $t\in \C$ and let $c: \mathcal{S} \to \C$ be a $W$-invariant
function. We write $c_s$ for the value $c(s)$, and identify $c$ with
an element in $\C^{|\mathcal{S}/W|}$. Let $m_{t,c}$ be the maximal
ideal of ${R} = \C[\C \times \C^{|\mathcal{S}/W|}]$ corresponding to
$(t,c)$. The rational Cherednik algebra specialised at $(t,c)$ is
the algebra $H_{t,c} := R/m_{t,c} \otimes_R H_R$.

The first important property about $H_R$ is the following PBW theorem.

\begin{thm}
The multiplication map $S_{R}(V) \otimes_{R} {R}W \otimes_{R} S_{R}(V^*) \to H_{R}$ is an isomorphism
of $R$-modules.
\end{thm}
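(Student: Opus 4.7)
The plan is to prove this PBW theorem by the classical Etingof--Ginzburg strategy: produce a spanning set by rewriting, then exhibit linear independence via a faithful ``Dunkl-operator'' representation of $H_R$ on a polynomial module. These two steps together give the module isomorphism.

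\textbf{Step 1 (spanning).} First I would show that every element of $H_R$ is an $R$-linear combination of ordered monomials of the form $y_{i_1}\cdots y_{i_p}\cdot w\cdot x_{j_1}\cdots x_{j_q}$, with $y_i\in V$, $x_j\in V^*$, $w\in W$. This is a straightforward induction on the total degree in $V\oplus V^*$: the smash-product relations $wy = w(y)w$ and $wx = w(x)w$ let us migrate every group element to its designated slot, while the defining commutation
\[
[y,x] = \underline{t}\langle x,y\rangle - \sum_{s\in \mathcal{S}} \underline{c}_s \frac{\langle\alpha_s,y\rangle\langle x,v_s\rangle}{\langle\alpha_s,v_s\rangle}\,s
\]
moves each $y$ past each $x$ at the cost of an expression of strictly smaller $V$- and $V^*$-degree (plus a reflection group element).

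\textbf{Step 2 (faithful representation).} For injectivity I would construct an $R$-linear action of $H_R$ on $M := R\otimes_{\C} S(V^*)$. Each $x\in V^*$ acts by multiplication and each $w\in W$ acts via the natural action on $S(V^*)$. For $y\in V$ I would define the Dunkl operator
\[
D_y(f) = \underline{t}\,\partial_y(f) - \sum_{s\in \mathcal{S}} \underline{c}_s\frac{\langle\alpha_s,y\rangle}{\langle\alpha_s,v_s\rangle}\cdot\frac{f - s(f)}{\alpha_s},
\]
the quotient by $\alpha_s$ being well-defined since $\alpha_s$ divides $f-s(f)$ in $S(V^*)$. One must check: (i) the commutator $[D_y,x]$ reproduces exactly the relation above; (ii) $D_y$ interacts with $w\in W$ so that $wD_y w^{-1}=D_{w(y)}$; and (iii) $[D_y,D_{y'}]=0$ for all $y,y'\in V$. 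Granting these, one obtains a well-defined algebra homomorphism $\rho:H_R\to \mathrm{End}_R(M)$.

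\textbf{Step 3 (injectivity).} Suppose a relation $\sum_{I,w,J} r_{I,w,J}\, y^I\, w\, x^J = 0$ holds in $H_R$, with $y^I$ and $x^J$ ordered monomials. Apply $\rho$ and filter by declaring $V^*$ to have degree $+1$, $V$ degree $-1$, and $W$ degree $0$. The leading symbol of $D_y$ acting on $S(V^*)$ is $\underline{t}\,\partial_y$, but more to the point, the top-degree part of $\rho(y^I w x^J)$ on a polynomial $f$ is precisely the action of the corresponding element of $S(V\oplus V^*)\rtimes W$ on $f$; since the undeformed algebra visibly satisfies PBW, the coefficients $r_{I,w,J}$ must vanish. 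Combined with Step 1 this yields the isomorphism.

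\textbf{Main obstacle.} The crux is the commutativity $[D_y,D_{y'}]=0$. The $\underline{t}^2$ and linear-in-$\underline{c}$ contributions vanish after antisymmetrization and using that $\underline{c}$ is a class function, but the quadratic-in-$\underline{c}$ contribution is genuinely subtle: it reduces to an identity, involving sums over pairs of reflections $s,s'\in\mathcal{S}$, which exploits that $W$ acts on its own hyperplane arrangement and that the parameters $\underline{c}_s$ are conjugation-invariant. This identity is the real computational heart of the theorem and is the step where one must either grind through the double sum directly or invoke the argument of Etingof--Ginzburg \cite{EG}.
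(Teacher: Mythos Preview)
Your approach is essentially correct but takes a different and considerably longer route than the paper. The paper does not redo the Dunkl-operator construction at all. It simply cites the specialized PBW theorem \cite[Theorem~1.3]{EG}, which establishes the isomorphism for every $H_{t,c}$ with complex parameters, and then lifts to $R$ by a short specialization argument: surjectivity of the multiplication map over $R$ is clear, and if some nonzero $x\in S_R(V)\otimes_R RW\otimes_R S_R(V^*)$ lay in the kernel, there would exist a parameter value $(t,c)$ at which the specialization $x_{t,c}$ is still nonzero; then $x_{t,c}$ would lie in the kernel of the multiplication map for $H_{t,c}$, contradicting the cited result.

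What you gain with your route is self-containment; what it costs is precisely the computation you flag as the main obstacle, the commutativity $[D_y,D_{y'}]=0$, which the paper absorbs into the citation to \cite{EG}. One minor looseness in your Step~3: the leading part of $\rho(y^I w x^J)$ carries a factor $\underline{t}^{|I|}$, and $S(V\oplus V^*)\rtimes W$ does not itself act on $S(V^*)$ (derivations and multiplications do not commute as operators). What you really need is that the normally ordered differential operators $\partial^I\, w\, x^J$ are $R$-linearly independent in $\mathrm{End}_R(M)$ and that $\underline{t}$ is a non-zerodivisor in $R$; with that adjustment the argument goes through.
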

\begin{proof}
The analogous property is proved for all $H_{t,c}$ in \cite[Theorem 1.3]{EG}.
The multiplication map over $R$ is clearly surjective.
If $x \in S_{R}(V) \otimes_{R} {R}W \otimes_{R} S_{R}(V^*)$
lies in the kernel, then there exists a $(t,c)$ such that the
specialisation $x_{t,c}$ of $x$ at $(t,c)$ is nonzero,
and furthermore $x_{t,c}$ lies in the kernel of the
multiplication map $S_{\C}(V) \otimes_{\C} {\C}W \otimes_{\C} S_{\C}(V^*) \to H_{t,c}$,
a contradiction.\end{proof}

We can filter $H_{R}$ by the putting $V$ and $V^*$ in degree $1$, and putting ${R} W$ in degree $0$, and we denote by $\gr H_{R}$ the corresponding associated graded algebra.
A consequence of the PBW theorem is the following.

\begin{cor}
There is an isomorphism of ${R}$-algebras $\gr H_{R} \cong S_{R}(V \oplus V^*) \rtimes W$.
\end{cor}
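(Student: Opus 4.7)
The plan is to construct a natural surjective $R$-algebra map $\phi \colon S_{R}(V \oplus V^*) \rtimes W \to \gr H_{R}$ and then deduce injectivity from the preceding PBW theorem by a rank comparison on $R$-modules.

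To build $\phi$, I would check that the defining relations of the smash product $S_R(V \oplus V^*) \rtimes W$ are satisfied by the symbols of $V$, $V^*$ and $W$ in $\gr H_R$. The relations $[x,x']=0$ and $[y,y']=0$ already hold in $H_R$ itself, so they persist in the associated graded. The smash product relations $w \xi w^{-1} = w(\xi)$ for $\xi \in V \oplus V^*$ and $w \in W$ are built into the definition of $H_R$ via the tensor product with $RW$. The only nontrivial relation is the commutativity of $V$ with $V^*$: in $H_R$ the commutator equals
\[ [y,x] = \underline{t}\langle x,y\rangle - \sum_{s \in \mathcal{S}} \underline{c}_{s} \frac{\langle \alpha_s,y\rangle \langle x,v_s\rangle}{\langle \alpha_s,v_s\rangle}\, s, \]
which sits in filtration degree $0$, whereas $yx$ and $xy$ sit in filtration degree at most $2$. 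Hence the class of $[y,x]$ in $(\gr H_R)_2$ vanishes, i.e.\ the symbols of $V$ and $V^*$ commute in $\gr H_R$. The universal property of the smash product then yields $\phi$, and surjectivity is immediate since $V$, $V^*$ and $W$ generate $H_R$ as an $R$-algebra, so their symbols generate $\gr H_R$.

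For injectivity, I would use the $R$-module isomorphism $\mu \colon S_R(V) \otimes_R RW \otimes_R S_R(V^*) \to H_R$ from the PBW theorem. Filter the left-hand side by total polynomial degree in $V$ and $V^*$. Then $\mu$ is a morphism of filtered $R$-modules, so it induces an $R$-module isomorphism on associated gradeds. The associated graded of the source is canonically identified with $S_R(V \oplus V^*) \otimes_R RW$ as a graded $R$-module, and the resulting $R$-module isomorphism $S_R(V \oplus V^*) \otimes_R RW \to \gr H_R$ agrees with $\phi$ on the generators and hence everywhere. Consequently $\phi$ is an isomorphism of $R$-modules, hence of $R$-algebras.

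The only real point requiring care is verifying that $\phi$ coincides with $\gr \mu$ under the above $R$-module identifications, which reduces to checking that the ordering of factors chosen in the PBW theorem is compatible with the smash product conventions used to define $\phi$. This is a minor but genuine bookkeeping step; everything else is formal.
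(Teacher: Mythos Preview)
Your argument is correct and is exactly the standard way to deduce this from the PBW theorem; the paper gives no proof beyond declaring the corollary ``a consequence of the PBW theorem'', so you have simply spelled out what the paper leaves implicit. The only caveat is the one you already flag: to conclude that $\phi$ agrees with $\gr\mu$ you need that $\mu$ is a \emph{strict} filtered isomorphism, i.e.\ that $\mu$ carries the degree-$\leq k$ piece of $S_R(V)\otimes_R RW\otimes_R S_R(V^*)$ onto $F_kH_R$, which follows since the defining commutation relations never raise filtration degree.
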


There exists a $\Z$-grading on $H_R$ by putting $V$ and $V^*$ in
degree $1$ and $-1$, respectively, and putting ${R} W$ in degree
$0$. This is obtained by considering an algebraic $\C^*$-action on
$H_R$ with the corresponding weights. In particular, it follows that
the filtered pieces of $H_R$ are also $\Z$-graded.

\begin{comment}\subsection{Basechange}\label{basechange} Given any ring
homomorphism $f: {R} \to T$, we define $H_T := T \otimes_{R} H_{R}$.
By the PBW theorem, this is a free $T$-module with a PBW basis as in
Theorem \ref{PBW}. If $u \in T$ is a unit, then it is
straightforward to show that $H_T$ is isomorphic to the algebra with
the same generators as $H_T$, but with the parameters
$uf(\underline{t}), uf(\underline{c}_{s})$ instead of
$f(\underline{t}), f(\underline{c}_{s})$.

We will be particularly interested in the following situation. Recall the parameters $t$ and $c$ from \ref{PBW}, and the maximal ideal $m_{t,c}$ of $R$. Denote by $\hat{{R}}_{t,c}$ the completion of
${R}$ at $m_{t,c}$, and $\hat{{Q}}_{t,c}$ the quotient field of $\hat{{R}}_{t,c}$. By Theorem \ref{PBW}, we have the following modular system: \begin{align*} \xymatrix{
{H}_{\hat{{R}}_{t,c}} \ar@{^{(}->}[r] \ar@{->>}[d] & {H}_{\hat{{Q}}_{t,c}}
\\ {H}_{t,c}. & } \end{align*}\end{comment}

\subsection{Restricted rational Cherednik algebras}\label{RRCA}
Let ${c} =(0,c)$ be a parameter and let $H_{c} = H_{(0,c)}$. By
\cite[Proposition 3.6]{G1}, there is an algebra monomorphism \[ A :=
(S_{\C} V)^W \otimes_{\C} (S_{\C} V^*)^W \hookrightarrow H_{c},\]
whose image lies in the centre of $H_{c}$. Let $A^+$ denote the
ideal in $A$ generated by polynomials with zero constant term. The
quotient algebra $\overline{H}_{c} := H_{c}/<A^+>$ is called the
\textit{restricted rational Cherednik algebra}.

By Theorem \ref{PBW}, $H_{c}$ is a free $A$-module, and
$\overline{H}_{c} \cong (S_{\C} V)^{co\ W} \otimes_{\C} {\C}W
\otimes_{\C} (S_{\C} (V^*))^{co\ W}$ as vector spaces (here $(S_{\C}
V)^{co\ W}$ denotes the coinvariant ring). The $\Z$-grading on $H_R$
induces a $\Z$-grading on $H_c$ and also on $\overline{H}_c$, since
$<A^+>$ is a graded ideal.

For any $a \in \C^*$, it is easy to show that there is an
isomorphism of algebras $H_c \cong H_{ac}$, which induces
isomorphisms
\begin{align}\label{shiftiso} \overline{H}_c \cong \overline{H}_{ac}.
\end{align}

\subsection{Baby Verma modules}\label{babyVs} Let $\Irr \C W$ denote the complex irreducible representations of $W$.
Let $E \in \Irr \C W$. We can construct the corresponding baby Verma
module as in \cite{G1}: it is the induced module $\overline{H}_{c}
\otimes_{(S_{\C} V)^{co\ W} \rtimes W} E$, where the action of an
element $pg \in (S_{\C} V)^{co\ W} \rtimes W$ on $E$ is given by
$p(0)g$. Denote this module by $\overline{\Delta}_c(E)$.

\begin{prop}\cite[Proposition 4.3]{G1}
The baby Verma modules $\overline{\Delta}_c(E)$ are indecomposable
$\overline{H}_{c}$-modules. They have simple heads $L_c(E)$, and the
set $\{ L_c(E): E\in \Irr \C W\}$ is a complete set of pairwise
nonisomorphic simple modules for $\overline{H}_{c}$.
\end{prop}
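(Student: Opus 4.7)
The plan is to use the Frobenius reciprocity arising from the subalgebra $(S_{\C}V)^{co\,W}\rtimes W \subset \overline{H}_c$, together with the triangular decomposition $\overline{H}_c \cong (S_{\C}V^*)^{co\,W}\otimes \C W \otimes (S_{\C}V)^{co\,W}$ from the PBW theorem, to simultaneously classify the simple modules of $\overline{H}_c$ and pin down the structure of the baby Vermas.

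Write $U^+ := (S_{\C}V)^{co\,W}$, a finite-dimensional commutative local subalgebra of $\overline{H}_c$ with nilpotent augmentation ideal $U^+_{>0}$, set $U^- := (S_{\C}V^*)^{co\,W}$, and view $E\in\Irr\C W$ as a $U^+\rtimes W$-module with $U^+_{>0}$ acting by zero. The induction-restriction adjunction then yields, for every $\overline{H}_c$-module $M$,
\[
\Hom_{\overline{H}_c}(\overline{\Delta}_c(E),M)\;\cong\;\Hom_{U^+\rtimes W}(E,M)\;\cong\;\Hom_{\C W}\bigl(E,\,M^{U^+_{>0}}\bigr),
\]
where $M^{U^+_{>0}} := \{m\in M : U^+_{>0}\cdot m = 0\}$. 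This identity is the engine of the proof.

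The next step is to classify the simples. For any simple $L$, the nilpotence of $U^+_{>0}$ in the local algebra $U^+$ forces $L^{U^+_{>0}}\neq 0$; moreover, since $L$ is simple and $U^+_{>0}$ kills $L^{U^+_{>0}}$ while $\C W$ preserves it, the PBW decomposition gives $L = \overline{H}_c\cdot L^{U^+_{>0}} = U^-\cdot L^{U^+_{>0}}$. The key structural input is that $L^{U^+_{>0}}$ is $\C W$-irreducible: using the commutation relations of $\overline{H}_c$ one checks that for any $W$-subrepresentation $E' \subseteq L^{U^+_{>0}}$ the subspace $U^-\cdot E'$ is an $\overline{H}_c$-submodule of $L$, so simplicity of $L$ together with an analysis of the ``top piece'' of $U^-\cdot E'$ forces $E' = L^{U^+_{>0}}$. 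The assignment $L\mapsto L^{U^+_{>0}}$ therefore injects simples into $\Irr\C W$, and applying the adjunction to $M = L$ with $E = L^{U^+_{>0}}$ yields a surjection $\overline{\Delta}_c(E)\twoheadrightarrow L$, so every $E\in\Irr\C W$ arises. Writing the resulting simple as $L_c(E)$, we have $L_c(E)^{U^+_{>0}}\cong E$, and hence distinct $L_c(E)$ are non-isomorphic.

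With the classification in place, plugging $M = L_c(F)$ into the adjunction gives
\[
\Hom_{\overline{H}_c}(\overline{\Delta}_c(E), L_c(F)) \;\cong\; \Hom_{\C W}\bigl(E,\, L_c(F)^{U^+_{>0}}\bigr) \;\cong\; \Hom_{\C W}(E,F) \;=\; \delta_{E,F}\,\C,
\]
so $L_c(E)$ is the unique simple quotient of $\overline{\Delta}_c(E)$ and appears in its head with multiplicity one; hence $\overline{\Delta}_c(E)$ has simple head $L_c(E)$ and is in particular indecomposable. The main obstacle I expect is establishing the $\C W$-irreducibility of $L^{U^+_{>0}}$ for every simple $L$: this is the nontrivial structural fact that requires a careful commutator analysis showing both that $U^-\cdot E'$ is $\overline{H}_c$-stable and that $E'$ can be recovered as a canonical top piece of $U^-\cdot E'$; once this is verified the Frobenius reciprocity argument assembles the rest of the proof cleanly.
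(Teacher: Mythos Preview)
The paper does not give its own proof of this proposition; it simply cites \cite[Proposition 4.3]{G1}. Your approach via Frobenius reciprocity and the triangular decomposition is exactly the standard one (due to Gordon, following Holmes--Nakano), so in substance you are reproducing the argument behind the citation.

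One point deserves sharpening. You correctly identify the irreducibility of $L^{U^+_{>0}}$ as the crux, but your sketch (``an analysis of the top piece of $U^-\cdot E'$'') is not quite an argument without the $\Z$-grading, because for an ungraded $L$ there is no a priori ``top piece'' to recover. The clean way to close this is to use that $\overline{H}_c$ is $\Z$-graded (with $V$ in degree $1$, $V^*$ in degree $-1$, $W$ in degree $0$), so every simple module is gradable. Grading $L$, its top degree component $L_{\mathrm{top}}$ is $W$-stable and killed by $U^+_{>0}$; your submodule argument then shows $L_{\mathrm{top}}$ is $W$-irreducible. Moreover $L^{U^+_{>0}}$ is itself graded, and if it contained a nonzero homogeneous vector $v$ of degree strictly below the top, then $\overline{H}_c\cdot v = U^-\,\C W\cdot v$ would be a nonzero graded submodule missing the top degree, contradicting simplicity. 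Hence $L^{U^+_{>0}} = L_{\mathrm{top}}$ is irreducible, and the rest of your Frobenius reciprocity computation goes through verbatim.
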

\begin{comment}
\begin{proof}
One can apply the proof of \cite[Lemma 3.2]{HN} (it is assumed in
loc. cit. that $T$ is algebraically closed,
but it suffices that $T$ is a splitting field for $W$).
\end{proof}
\end{comment}

\section{Cherednik algebras for $G(m,1,n)$}\label{DOalgebra}

\subsection{}\label{wreathcherednik} Let $m,n$ be positive integers. Let $C_m$ be the cyclic group of order $m$. We fix a generator $g \in C_m$ and let $\eta \in \C$ be a
primitive $m$th root of unity. Let $s_{ij}\in S_n$ denote the
transposition which swaps $i$ and $j$. We denote by $s_j$ the simple
transposition swapping $j$ and $j+1$. The group $W=G(m,1,n)$ is the
semidirect product $S_n \ltimes (C_m)^n$. We write $g_i^l$ for
the element $(1, \dots ,g^l, \dots 1) \in G(m,1,n)$ with $g^l$ in
the $i$th place. Let $V$ be the reflection representation of
$G(m,1,n)$: we fix a basis $\{ y_1, \dots ,y_n\}$ of $V$ so that
\begin{align*}g_i(y_j) = \begin{cases} \eta y_j\ \mathrm{if}\ i=j\\ y_j\ \mathrm{otherwise} \end{cases} \mathrm{and}\ \sigma(y_j) =
y_{\sigma(j)},\end{align*} for all $i, j$ and all $\sigma \in S_n$.
Let $\{x_1, \dots ,x_n\} \in V^*$ be the dual basis.

The conjugacy classes of reflections in $W$ are given
by \begin{align}\label{conj1} \{s_{ij} g_i^{-l} g_j^l: 0 \leq l \leq m-1\ \mathrm{and}\ i\neq j\},
\end{align} and, for each $1 \leq l \leq m-1$, \begin{align}\label{conj2} \{
g_j^l: 1 \leq j \leq n\}.\end{align} We fix formal variables
$(\underline{t}; \underline{\kappa}, \underline{c}_1, \dots
,\underline{c}_{m-1})$, where $\underline{\kappa}$ corresponds to
(\ref{conj1}) and the $\underline{c}_l$ correspond to the classes in (\ref{conj2}).
Let $R = \C[\underline{t}, \underline{\kappa},
\underline{c}_1, \dots , \underline{c}_{m-1}]$. In the case
$W=G(m,1,n)$, Definition \ref{PBW} becomes the following.

\begin{defn}The rational Cherednik algebra for $G(m,1,n)$, $H_{R} = H_{R}(G(m,1,n))$, is the quotient of the smash
product $T_{R}(V \oplus V^*) \rtimes W$ by the
relations:
\begin{align*} &[x_i , x_j] = 0,\ \ [y_i, y_j]=0, \\ &[y_i , x_i] = \underline{t} -
\underline{\kappa} \sum_{l=0}^{m-1} \sum_{j \neq i} s_{ij} g_i^{-l} g_j^l -
\sum_{l=1}^{m-1} \underline{c}_l(1-\eta^{-l}) g_i^l,\\ &[y_i, x_j] =
\underline{\kappa}\sum_{l=0}^{m-1} \eta^{-l} s_{ij} g_i^{-l} g_j^{l}.
\end{align*}
\end{defn}

%\noindent \textbf{Remark.} In fact we are using the parameters $(\underline{t}, \underline{\kappa} ; \underline{c}_1(1-\eta^{-1}), \dots
%,\underline{c}_{m-1}(1-\eta^{1-m}))$, but the base ring corresponding to these parameters is the the ring $R$ above.

\medskip

As in \ref{RRCA}, if we are given a tuple $c=(0; \kappa, c_1, \dots
,c_{m-1}) \in \C^{m+1}$, then we can specialise $H_{{R}}$ to the
algebra $H_c$. We will also use the formal parameters
$(\underline{t}, \underline{h}, \underline{H}_0, \dots ,
\underline{H}_{m-1})$ from \cite{G2}. Thus, $\underline{H}_0 + \dots
+ \underline{H}_{m-1} = 0$, and the $\underline{h}$,
$\underline{H}_i$ are related to the parameters above via
\begin{align}\underline{h} = - \underline{\kappa}\ \mathrm{and}\
-\underline{c}_l(1-\eta^{-l}) = \sum_{j=0}^{m-1}
\eta^{-lj}\underline{H}_j.\label{parameters} \end{align} We will
denote by $(0,h,H_0, \dots ,H_{m-1})$ the image of the
$\underline{t}, \underline{h}, \underline{H}_i$ in $R/m_{(0,c)}$ -
these then become another set of parameters for $H_c$.

\subsection{The Dunkl-Opdam operators}\label{DOops} For all $1 \leq i \leq n$, we
define elements in $H_{R}$: \begin{align} z_i &= y_i x_i -
\frac{1}{2}\underline{t} + \underline{\kappa} \sum_{l=0}^{m-1}
\sum_{1 \leq j < i} s_{ij} g_i^l g_j^{-l} - \sum_{l=1}^{m-1}
\underline{c}_l \eta^{-l} g_i^l\label{DO1}\\ &= x_i y_i +
\frac{1}{2}\underline{t} - \underline{\kappa} \sum_{l=0}^{m-1}
\sum_{i < j} s_{ij} g_i^{l} g_j^{-l} - \sum_{l=1}^{m-1}
\underline{c}_l g_i^l \label{DO2}.\end{align}

\medskip

\noindent \textbf{Remark.} The elements $z_i$ have already appeared
in \cite{DO} and \cite{Gr1} for the rational Cherednik algebra
specialised at $\underline{t} = 1$, where their action on Verma
modules is studied. Our definition differs from these because we
have added the terms $- \frac{1}{2}\underline{t}$ and $-
\sum_{l=1}^{m-1} \underline{c}_l \eta^{-l} g_i^l$ in (\ref{DO1}).
This is important for Theorem \ref{centralthm}, but using this
definition changes little in the $\underline{t} = 1$ applications.

\medskip

We define \[ \xi_{ij} = \sum_{l=0}^{m-1}
g_i^l g_j^{-l}.\]

\begin{lem}\label{HeckeRelations} The following relations hold in
$H_{R}$:  \begin{itemize} \item[(a)] $[z_i, z_j] = 0$;
\item[(b)] $z_i g_k = g_k z_i$; \item[(c)] $z_i s_{i} =
s_{i} z_{i+1} - \underline{\kappa} \xi_{i,i+1}$ \item[(d)] $z_i
s_j = s_j z_i$ for $i\neq j,j+1$. \end{itemize}
\end{lem}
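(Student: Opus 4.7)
The plan is to verify the four relations by direct computation in $H_R$, using whichever of the two expressions (\ref{DO1}) or (\ref{DO2}) for $z_i$ keeps the book-keeping to a minimum. The only non-trivial commutators between generators are $[y_a, x_b]$, and every other piece of $z_i$ lies in $\C W$, so the whole argument reduces to tracking where $x$'s, $y$'s, and $g$'s end up after being permuted past one another.

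Parts (b) and (d) are invariance statements. For (b) I take (\ref{DO1}) and check term by term that $g_k$ centralises $z_i$. The scalars $\eta^{\pm 1}$ picked up from moving $g_k$ past $y_i$ and $x_i$ cancel (and for $k \neq i$ nothing is picked up at all), so $g_k y_i x_i = y_i x_i g_k$; the sum $\sum_l s_{ij} g_i^l g_j^{-l}$ is invariant under conjugation by any $g_k$, because if $k \notin \{i,j\}$ it commutes with each summand and if $k \in \{i,j\}$ it merely reindexes the cyclic sum over $l$; and the $g_i^l$ part is obvious. For (d), the condition $j \neq i, i-1$ forces $s_j$ to permute indices lying entirely below $i$ or entirely above $i$, hence $s_j$ commutes with each summand of (\ref{DO1}) separately.

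For (c) I compute $s_i z_{i+1}$ using (\ref{DO1}) and push $s_i$ to the right of every term. The piece $s_i y_{i+1} x_{i+1} = y_i x_i s_i$; the piece $s_i g_{i+1}^l = g_i^l s_i$; and the inner sum conjugates term by term as $s_i (s_{i+1,k} g_{i+1}^l g_k^{-l}) = s_{i,k} g_i^l g_k^{-l} s_i$ for $k < i$, matching the corresponding pieces of $z_i s_i$ exactly. The only discrepancy comes from the extra summand $k = i$ in the inner sum defining $z_{i+1}$: there $s_{i+1,i} = s_i$, so $s_i \cdot s_i g_{i+1}^l g_i^{-l} = g_{i+1}^l g_i^{-l}$ with no trailing $s_i$, and a change of index $l \mapsto -l$ turns $\underline{\kappa}\sum_l g_{i+1}^l g_i^{-l}$ into $\underline{\kappa}\xi_{i,i+1}$, giving precisely the displayed correction.

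Part (a) is the main computation, and the key device is asymmetry: assume $i < j$, use (\ref{DO1}) for $z_i$ and (\ref{DO2}) for $z_j$. Writing $z_i = y_i x_i + A_i$ and $z_j = x_j y_j + B_j$, the element $A_i$ is supported on the indices $1, \ldots, i$ while $B_j$ is supported on $j, \ldots, n$, and since $i < j$ these supports are disjoint. Consequently $[A_i, B_j] = [A_i, x_j y_j] = [y_i x_i, B_j] = 0$ trivially, so $[z_i, z_j] = [y_i x_i, x_j y_j]$. Expanding using $[x_i, x_j] = [y_i, y_j] = 0$ gives $[y_i, x_j]\, x_i y_j - x_j y_i\, [y_j, x_i]$; substituting the defining formulas for the commutators, applying $s_{ij} x_i y_j = x_j y_i s_{ij}$, and tracking the scalar factors of $\eta^l$ that arise when $g_i^{-l}$ and $g_j^l$ cross $x_i$ and $y_j$ respectively, both sides reduce to the single expression $x_j y_i \cdot \underline{\kappa}\sum_l \eta^l s_{ij} g_i^{-l} g_j^l$, so the commutator vanishes. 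The main obstacle is precisely this $\eta$-bookkeeping, but everything else is automatic once the disjoint-support observation has removed all the other cross-terms.
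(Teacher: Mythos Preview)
Your argument is correct and self-contained, but it proceeds along a different route from the paper. The paper does not verify the four relations directly; instead it quotes \cite[Propositions~4.2 and~4.3]{Gr1}, where the analogous relations are established for the undeformed Dunkl--Opdam elements $\tilde z_i = z_i + \sum_{l=1}^{m-1}\eta^{-l}\underline{c}_l g_i^l + \tfrac12\underline t$, and then observes that the added correction $\sum_l \eta^{-l}\underline{c}_l g_i^l + \tfrac12\underline t$ lies in the group algebra of $(C_m)^n$, commutes with every $g_k$ and with every $s_j$ for $j\neq i,i-1$, and satisfies $s_i\bigl(\sum_l \eta^{-l}\underline{c}_l g_{i+1}^l\bigr) = \bigl(\sum_l \eta^{-l}\underline{c}_l g_i^l\bigr)s_i$; hence (a)--(d) transfer from $\tilde z_i$ to $z_i$ for free. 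Your approach trades this citation for an explicit computation: the disjoint-support trick in (a), choosing (\ref{DO1}) for $z_i$ and (\ref{DO2}) for $z_j$ so that the group-algebra tails $A_i$ and $B_j$ live on non-overlapping index sets, is exactly what makes the direct route feasible, collapsing $[z_i,z_j]$ to the single commutator $[y_ix_i,x_jy_j]$. The paper's method is shorter on the page because the work has been outsourced to \cite{Gr1}; yours is longer but independent of that reference and gives a clearer picture of why the relations hold.
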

\begin{proof}
By \cite[Propositions 4.2 and 4.3]{Gr1}, these hold for the elements
$\tilde{z}_i = z_i + \sum_{l=1}^{m-1} \eta^{-l} \underline{c}_l
g_i^l + \frac{1}{2} \underline{t}$. It is straightforward to check
that the relations (a)-(d) above can be deduced from this.
\end{proof}

By Theorem \ref{PBW}, the algebra $\mathfrak{t}_R$ generated over
$R$ by the $z_i$ and $W$ is isomorphic to a graded Hecke algebra for
$G(m,1,n)$ as introduced in \cite[$\S$ 5]{RS}. In particular, the
subalgebra generated by the $z_i$s is a polynomial algebra in $n$
variables. It follows from \cite[Lemma 5.1]{Gr1} that the
$R$-subalgebra $\mathfrak{Z}_R$ of symmetric polynomials in the
$z_i$ is central in $\mathfrak{t}_R$, and in particular commutes
with $W$.

\subsection{An involution on $H_R$}\label{duality} We define an automorphism $\psi$ of $H_R$ via:
\begin{align*} \underline{t} \mapsto -\underline{t},\ \underline{\kappa}
\mapsto &-\underline{\kappa},\ \underline{c}_l \mapsto \eta^l
\underline{c}_{-l},
\\ s_i \mapsto s_{n-i},\ g_i \mapsto g_{n-i+1}^{-1},&\
x_i \mapsto y_{n-i+1},\ y_i \mapsto x_{n-i+1}.
\end{align*}

It is an easy check using the relations in \ref{wreathcherednik}
that $\psi$ is indeed a $\C$-algebra homomorphism, and clearly
$\psi^2 = \Id$. Furthermore, by the equality of (\ref{DO1}) and
(\ref{DO2}) it follows that $\psi(z_i) = z_{n-i+1}$ for all $i$.

\subsection{A central theorem}\label{centralthm} We now state one of our main theorems,
the proof of which will occupy Section \ref{proofofcentral}.

\begin{thm}
Let $\mathfrak{S}_r = \sum_{1\leq j_1 < j_2 < \dots < j_r \leq n}
z_{j_1} \dots z_{j_r}$ denote the $r$th symmetric polynomial in the
$z_i$s. Then $[x_1, \mathfrak{S}_r]$ lies in $\underline{t}H_{R}$.
In particular, the element $\mathfrak{S}_r$ specialised at $c=(0,c)$
is central in $H_{c}$.
\end{thm}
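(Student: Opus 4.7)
The goal is $[x_1,\mathfrak{S}_r]\in\underline{t}H_R$; the ``in particular'' assertion then follows by symmetry. Indeed, by Lemma \ref{HeckeRelations} the element $\mathfrak{S}_r$ already commutes with all of $W$ (parts (b) and (d) directly; part (c) combined with the symmetry of $\mathfrak{S}_r$ in the $z_i$ gives $[\mathfrak{S}_r,s_i]=0$). Since $V^*$ is irreducible as a $W$-module, the $W$-orbit of $x_1$ spans $V^*$, so $[x_1,\mathfrak{S}_r]\in\underline{t}H_R$ forces $[x,\mathfrak{S}_r]\in\underline{t}H_R$ for every $x\in V^*$ via $w[x_1,\mathfrak{S}_r]w^{-1}=[wx_1w^{-1},\mathfrak{S}_r]$; the involution $\psi$ of \ref{duality} permutes the $z_i$ (hence fixes $\mathfrak{S}_r$), sends $\underline{t}\mapsto-\underline{t}$ and $V^*\mapsto V$, so applying it transfers the conclusion to $V$. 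At $\underline{t}=0$ we obtain centrality.

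The substantive calculation is of $[x_1,z_i]$. Using (\ref{DO1}) for $z_1$ and (\ref{DO2}) for $z_i$ with $i\geq 2$, and expanding via the defining relations of \ref{wreathcherednik}, one finds
\begin{align*}
[x_1,z_1]&=-\underline{t}\,x_1+\underline{\kappa}\sum_{j>1}s_{1j}\,\xi_{1j}\,x_1,\\
[x_1,z_i]&=-\underline{\kappa}\,x_i\sum_{l=0}^{m-1}\eta^{-l}\,s_{1i}\,g_i^{-l}g_1^l\qquad(i\geq 2).
\end{align*}
In both formulas the $\underline{c}_l$-contributions cancel exactly, which is the role of the normalising terms $-\tfrac{1}{2}\underline{t}$ and $-\underline{c}_l\eta^{-l}g_i^l$ flagged in the remark after (\ref{DO1})--(\ref{DO2}). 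The base case $r=1$ already follows: moving $x_1$ to the right via $g_1^l x_1=\eta^{-l}x_1 g_1^l$ and $s_{1j}x_1=x_j s_{1j}$, and using the commutativity $g_1^l g_j^{-l}=g_j^{-l}g_1^l$, makes the reflection sums in $[x_1,z_1]$ and in $\sum_{i\geq 2}[x_1,z_i]$ term-by-term equal, so they cancel and $[x_1,\mathfrak{S}_1]=-\underline{t}x_1$.

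For general $r$ I would expand
\begin{align*}
[x_1,\mathfrak{S}_r]=\sum_{j_1<\cdots<j_r}\sum_{k=1}^{r} z_{j_1}\cdots z_{j_{k-1}}[x_1,z_{j_k}]z_{j_{k+1}}\cdots z_{j_r},
\end{align*}
substitute the two formulas above, and propagate the reflections $s_{1,j_k}$ to the right through the remaining $z$-factors using Lemma \ref{HeckeRelations}(c) and (d). The cancellation mechanism of the $r=1$ case should generalise: the reflection contributions from positions with $j_k=1$ ought to pair with the reflection contributions from positions with $j_k\geq 2$, leaving only terms in $\underline{t}H_R$. The main obstacle is precisely this combinatorial bookkeeping: every time a reflection factor is transposed past a $z$-factor, Lemma \ref{HeckeRelations}(c) introduces a $\underline{\kappa}\,\xi$-correction, and one must verify that the aggregate of all such corrections, summed over the $r$-subsets $\{j_1<\cdots<j_r\}$ and the position $k$, collapses to an element of $\underline{t}H_R$. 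This is the technical heart of Section \ref{proofofcentral}, and it is exactly what the Jack polynomial argument of \ref{Jackproof} replaces by a more conceptual computation.
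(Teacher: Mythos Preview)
Your reduction of the ``in particular'' statement is correct and matches the paper's argument in \ref{centralthm} (conjugate by $s_{1i}$ to reach all $x_i$, then apply $\psi$). Your computation of $[x_1,z_1]$ and $[x_1,z_i]$ for $i\ge 2$ is also correct---up to cosmetic rewriting it agrees with the paper's (\ref{zcomm})---and your treatment of $r=1$ is fine.

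The genuine gap is that for $r\ge 2$ you have not given a proof: you write that the cancellation mechanism ``should generalise'' and that the reflection contributions ``ought to pair'', and then explicitly defer to ``the technical heart of Section \ref{proofofcentral}''. That technical heart is the entire content of the theorem. Concretely, your proposed strategy---push each $s_{1,j_k}$ to the right through the remaining $z$-factors via Lemma \ref{HeckeRelations}(c)---generates, at every transposition, a $\underline{\kappa}\xi$-correction which itself does not commute with subsequent $z$'s; iterating produces a cascade of words in the $\gamma$'s and $z$'s whose cancellation over all $r$-subsets is not at all transparent from the $r=1$ picture. Nothing in your outline controls this cascade.

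The paper closes exactly this gap by introducing auxiliary elements $P_{j_1,\dots,j_r}$ and $\tilde P_{j_1,\dots,j_r}$, proving first (Lemma \ref{commrelations}) that $[x_1,z_{j_1}\cdots z_{j_r}]=P_{j_1,\dots,j_r}x_1$ in closed form, and then establishing by a separate induction (Proposition \ref{greatprop}, applied with $k=1$) an identity among the $P$'s and $\tilde P$'s that encodes precisely the global cancellation you need. In other words, the paper replaces your ``propagate reflections and hope everything cancels'' by a structured double induction in which the combinatorics is packaged into the $P$-polynomials. Without an analogue of Proposition \ref{greatprop}, your argument does not go through for $r\ge 2$.
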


Let us explain how the second claim follows from the first. We
assume that $\mathfrak{S}_r$ is specialised at $c$. We know that
$\mathfrak{S}_r$ commutes with $W$, \ref{DOops}, so it remains to
show that $[\mathfrak{S}_r, x_i] = [\mathfrak{S}_r, y_i] = 0$ for
all $i$. From the first part of the theorem we know that
$[\mathfrak{S}_r, x_1]=0$. Thus $s_{1i}[\mathfrak{S}_r, x_1]s_{1i} =
[s_{1i} \mathfrak{S}_r s_{1i}, x_i] = [\mathfrak{S}_r, x_i] =0$. Now
we apply the involution from \ref{duality} to obtain
$[\mathfrak{S}_r, y_{n-i+1}] = \psi([\mathfrak{S}_r, x_i]) = 0$ for
all $i$.

\medskip

\noindent \textbf{Remark.} In the theory of rational Cherednik
algebras, an important role is played by the Euler element
\[ \mathbf{eu} := \sum_{i=1}^n x_i y_i + \frac{n}{2}\underline{t} -
\underline{\kappa} \sum_{i \neq j} \sum_{l=0}^{m-1} s_{ij} g_i^{-l}
g_j^l - \sum_{i=1}^n \sum_{l=1}^{m-1} \underline{c}_l g_i^l.
\]
An easy check shows that the sum $\sum_{i=1}^n z_i$ equals the
$\mathbf{eu}$ in $H_{R}$. It is known that specialising $\bf eu$ at
$c$ produces a central element, see \cite[Section 3.1 (4)]{GGOR}.
This verifies the theorem in the special case $r=1$.

\section{Proof of Theorem \ref{centralthm}}\label{proofofcentral}

\subsection{Identities in the graded Hecke algebra}\label{techcalculations}
We begin our proof of Theorem \ref{centralthm} by listing some
relations in $H_R$. We define for all $i \neq j$, \[ \gamma_{ij} = -
\underline{\kappa} \sum_{l=0}^{m-1} s_{ij} g_i^{-l} g_j^{l}.
\] To simplify notation, let $\gamma_j = \gamma_{1j}$ for all $j
> 1$. The next relations all follow directly (by sometimes fairly lengthy computations) from the Definition \ref{PBW} and Lemma
\ref{HeckeRelations}:

\begin{align}
\gamma_{ij} &= \gamma_{ji} \label{gamma1};\\
\sigma \gamma_{ij} \sigma^{-1} &= \gamma_{\sigma(i), \sigma(j)}\ \mathrm{for\ all}\ \sigma \in S_n \label{gamma2};\\
\gamma_{u} z_v &= \begin{cases} z_u \gamma_u + \sum_{1 < t \leq u}
\gamma_u \gamma_t\ &\mathrm{if}\ v=1,
\\ z_v \gamma_u\ &\mathrm{if}\ u < v ,  \\
(z_1 - \sum_{1 < t \leq u} \gamma_t) \gamma_u\ &\mathrm{if}\ u=v,\\
z_v \gamma_u + \gamma_v \gamma_u - \gamma_u \gamma_v\ &\mathrm{if}\
u > v \neq 1. \label{gamma4}
\end{cases}
\end{align}
The relations (\ref{gamma1})-(\ref{gamma4}) yields the following
identities (we assume that $u,v \neq 1$):
\begin{align}\label{gamma5} \gamma_u (z_v + \gamma_v) = \begin{cases}
(z_v + \gamma_{uv})\gamma_u\ &\mathrm{if}\ 1 \neq u < v,\\ (z_1 - \sum_{1 < t < u} \gamma_t) \gamma_u\ &\mathrm{if}\ u=v,\\
(z_v + \gamma_v)\gamma_u\ &\mathrm{if}\ u
> v; \end{cases} \end{align} \begin{align}\label{gamma6} (z_v +
\gamma_v)\gamma_u = \begin{cases} \gamma_u(z_v + \gamma_{uv})\
&\mathrm{if}\ 1\neq u<v,\\ \gamma_u (z_1
- \sum_{1 < t < u} \gamma_t)\ &\mathrm{if}\ u = v,\\
\gamma_u(z_v + \gamma_v)\ &\mathrm{if}\ u > v.
\end{cases}
\end{align}\begin{comment}
\begin{align}\label{gamma7} z_{u} (z_v + \gamma_v) = \begin{cases} z_1 z_v +
\gamma_v z_v + \sum_{1 < t \leq v} \gamma_t \gamma_v\ &\mathrm{if}\
u=1, \\ (z_v +
\gamma_v)z_{u} + \gamma_v\gamma_u - \gamma_u \gamma_v\ &\mathrm{if}\ 1 < u < v,\\
z_{v}^2 + \gamma_vz_1 - \sum_{1 < t \leq v} \gamma_v \gamma_t\ &\mathrm{if}\ u=v,\\
(z_v + \gamma_v) z_{u}\ &\mathrm{if}\ u > v;
\end{cases} \end{align} and
\begin{align}\label{gamma8} (z_v + \gamma_v)z_{u} = \begin{cases} z_1 z_v + z_v \gamma_v +
\sum_{1 < t \leq v} \gamma_v \gamma_t\ &\mathrm{if}\ u=1,
\\ z_{u}(z_v +
\gamma_v) + \gamma_u\gamma_v - \gamma_v\gamma_u\ &\mathrm{if}\ 1<u<v,\\
 z_v^2 + z_{1}\gamma_v - \sum_{1 < t \leq v}\gamma_t \gamma_v\ &\mathrm{if}\ u = v,\\
z_{u}(z_v + \gamma_v)\ &\mathrm{if}\ u > v.
\end{cases}\end{align}\end{comment}
Let $1 < k \leq u$. An important consequence of (\ref{gamma6}) for
us will be:
\begin{align}
(z_u + \gamma_u) (\sum_{k < t \leq u} \gamma_t) &= \gamma_u (z_1 -
\sum_{1 < t < u} \gamma_t) + (z_u + \gamma_u) (\sum_{k < t < u}
\gamma_t)\notag \\ &= \gamma_u z_1 -\gamma_u \sum_{1 < t \leq k}
\gamma_t +  \sum_{k < t < u} \gamma_t z_u.\label{useful}
\end{align}

We now record
commutation relations for the $z_i$ in $H_{R}$. Let $1 \leq i,j \leq n$ be integers. Then \begin{align}\label{zcomm} [x_i , z_j] =
\begin{cases} -\underline{t} x_i -
\sum_{1 \leq k < i} x_i \gamma_{ki} - \sum_{i < k \leq n} \gamma_{ik} x_i\ &\mathrm{if}\ i=j,\\
\gamma_{ij} x_i\ &\mathrm{if}\ i < j,\\ x_i \gamma_{ij}\
&\mathrm{if}\ i > j;
\end{cases} \end{align} and \begin{align} [y_i, z_j] = \begin{cases} \underline{t} y_i + \sum_{1 \leq k < i} \gamma_{ki} y_i +
\sum_{i < k \leq n} y_i \gamma_{ik}\ &\mathrm{if}\ i=j,\\ - y_i
\gamma_{ij}\ &\mathrm{if}\ i < j,\\ - \gamma_{ij} y_i\ &\mathrm{if}\
i
> j.
\end{cases} \end{align}

\subsection{} We define elements in the Dunkl-Opdam subalgebra which
will play a significant role in our proof of Theorem
\ref{centralthm}.

\begin{defn} Given integers $1 < j_1 < \dots < j_r \leq n$, let
\[{P}_{j_1, \dots ,j_r} := \sum z_{k_1} \dots z_{k_s} \gamma_{l_1} \dots \gamma_{l_t},\]
where the sum is taken over all $1 < k_1 < \dots < k_s \leq n$, $ 1
< l_1 < \dots < l_t \leq n$ such that $t \geq 1$ and $\{ k_1 \dots
,k_s\} \cup \{ l_1 , \dots , l_t\} = \{ j_1, \dots , j_r\}$. We also
define \[\tilde{P}_{j_1, \dots ,j_r} := \sum z_{k_1} \dots z_{k_s}
\gamma_{l_1} \dots \gamma_{l_t},\] where the sum is taken as above,
except that we allow $t=0$.
\end{defn}

It is clear from the definitions that \begin{align} \tilde{P}_{j_1,
\dots ,j_r} &= {P}_{j_1, \dots ,j_r} + z_{j_1} \dots
z_{j_r}\label{Pver1}.\end{align} Since $\gamma_{j_t} z_{j_s} =
z_{j_s} \gamma_{j_t}$ if $t < s$ by (\ref{gamma4}), it follows that
\begin{align} \tilde{P}_{j_1, \dots ,j_r} = (z_{j_1} + \gamma_{j_1}) \dots
(z_{j_r} + \gamma_{j_r}) \label{tildePeqn}.\end{align} Now combining
(\ref{Pver1}) and (\ref{tildePeqn}) we obtain:
\begin{align}
{P}_{j_1, \dots ,j_r} =
(z_{j_1} + \gamma_{j_1}) P_{j_2, \dots ,j_r} + \gamma_{j_1} z_{j_2} \dots z_{j_r}.\label{Peqn0}%\\
%= {P}_{j_1, \dots ,j_{r-1}}z_{j_r} + \tilde{P}_{j_1, \dots ,j_{r-1}} \gamma_{j_r}. \label{Peqn0a}
\end{align}

\medskip

\begin{lem}\label{commrelations}Let $1 < j_1 < \dots < j_r \leq n$.
In the algebra $H_{R}$ we have $ [x_1, z_{j_1}z_{j_2}\dots z_{j_r}]
= P_{j_1, \dots ,j_r}x_1$ or equivalently, $ x_1 z_{j_1}z_{j_2}\dots
z_{j_r} = \tilde{P}_{j_1, \dots ,j_r}x_1$.
\end{lem}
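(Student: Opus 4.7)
The plan is to prove the lemma by a straightforward induction on $r$, using the commutation formula $[x_1, z_j] = \gamma_{1j} x_1$ (valid for all $1 < j$) from (\ref{zcomm}) and the factorisation $\tilde{P}_{j_1, \dots, j_r} = (z_{j_1} + \gamma_{j_1}) \cdots (z_{j_r} + \gamma_{j_r})$ from (\ref{tildePeqn}).

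For the base case $r=1$, I apply (\ref{zcomm}) directly to obtain $x_1 z_{j_1} = z_{j_1} x_1 + \gamma_{j_1} x_1 = (z_{j_1} + \gamma_{j_1}) x_1 = \tilde{P}_{j_1} x_1$. For the inductive step, assume the statement is known for the $(r-1)$-tuple $j_2 < \dots < j_r$. Using (\ref{zcomm}) once more to push $x_1$ past $z_{j_1}$, and then applying the inductive hypothesis to the inner product, I get
\[ x_1 z_{j_1} z_{j_2} \cdots z_{j_r} = (z_{j_1} + \gamma_{j_1})(x_1 z_{j_2} \cdots z_{j_r}) = (z_{j_1} + \gamma_{j_1}) \tilde{P}_{j_2, \dots, j_r} x_1, \]
and by (\ref{tildePeqn}) the right hand side is exactly $\tilde{P}_{j_1, j_2, \dots, j_r} x_1$.

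The equivalence of the two forms of the conclusion is immediate from (\ref{Pver1}): subtracting $z_{j_1} \cdots z_{j_r} x_1$ from the identity $x_1 z_{j_1} \cdots z_{j_r} = \tilde{P}_{j_1, \dots, j_r} x_1$ gives $[x_1, z_{j_1} \cdots z_{j_r}] = (\tilde{P}_{j_1, \dots, j_r} - z_{j_1} \cdots z_{j_r}) x_1 = P_{j_1, \dots, j_r} x_1$. I do not anticipate any real obstacle: the crucial point is that, because every $j_k$ satisfies $j_k > 1$, the commutator $[x_1, z_{j_k}]$ is always of the clean form $\gamma_{j_k} x_1$ with $x_1$ already on the right, so no further commutation through the remaining $z_{j_\ell}$'s is required and none of the more complicated cases of (\ref{gamma4}) are triggered.
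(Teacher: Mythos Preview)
Your proof is correct and follows essentially the same inductive approach as the paper, using the commutation relation $[x_1,z_j]=\gamma_j x_1$ from (\ref{zcomm}) together with the recursive structure of $\tilde{P}$. The only difference is cosmetic: the paper carries the induction on the commutator form and invokes (\ref{Peqn0}), whereas you work directly with the product form $x_1 z_{j_1}\cdots z_{j_r}=\tilde{P}_{j_1,\dots,j_r}x_1$ and the factorisation (\ref{tildePeqn}), which makes the inductive step slightly shorter.
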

\begin{proof}
The proof is by induction on $r$. The case $r=1$ follows from
(\ref{zcomm}). Now by (\ref{zcomm}), induction and (\ref{Peqn0}),
\begin{align*} [x_1, z_{j_1}z_{j_2}\dots z_{j_r}] &= [x_1 ,z_{j_1}]
z_{j_2}\dots z_{j_r} + z_{j_1} [x_1, z_{j_2}\dots z_{j_r}]\\ &=
\gamma_{j_1} x_1 z_{j_2} \dots z_{j_r} + z_{j_1} P_{j_2, \dots ,j_r}
x_1\\ &= \gamma_{j_1} [x_1, z_{j_2} \dots z_{j_r}] + \gamma_{j_1}
z_{j_2} \dots z_{j_r} x_1 + z_{j_1} P_{j_2, \dots ,j_r} x_1\\ &=
\gamma_{j_1} P_{j_2, \dots ,j_r} x_1 + \gamma_{j_1} z_{j_2} \dots
z_{j_r} x_1 + z_{j_1} P_{j_2, \dots ,j_r} x_1\\ &= P_{j_1, \dots
,j_r}x_1.
\end{align*} The equivalence of the equations is a consequence of (\ref{Pver1}).
\end{proof}

\subsection{Key calculation} We require one further technical
result.

\begin{prop}\label{greatprop}
Keep the notation from \ref{commrelations}. Let $1 \leq k < j_1$.
Then
\begin{align*} \sum_{k < j_1 < \dots < j_r \leq n} (z_1 - \sum_{k <
t \leq n} \gamma_t) \tilde{P}_{j_1, \dots ,j_r} = \sum_{k < j_1 <
\dots < j_r \leq n} z_1 z_{j_1}  \dots z_{j_{r}} - \sum_{k < j_1 <
\dots < j_r < j_{r+1} \leq n} {P}_{j_1, \dots, j_{r+1}}\\ + \sum_{k
< j_1 < \dots < j_r \leq n}\sum_{1 < t \leq k} \gamma_t P_{j_1,
\dots ,j_r}.\end{align*}
\end{prop}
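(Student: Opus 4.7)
I would proceed by induction on $r$, with base case $r=0$ (under the conventions $\tilde{P}_\emptyset = 1$ and $P_\emptyset = 0$) where both sides reduce immediately to $z_1 - \sum_{k<t\leq n}\gamma_t$.

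For the inductive step, first observe that the factors $(z_{j_i}+\gamma_{j_i})$ of $\tilde{P}_J$ commute pairwise — a routine check using (\ref{gamma4}) cases 2 and 4 shows $(z_a+\gamma_a)(z_b+\gamma_b) = (z_b+\gamma_b)(z_a+\gamma_a)$ for any distinct $a,b > 1$. Hence I can peel off the smallest factor: $\tilde{P}_J = (z_{j_1}+\gamma_{j_1})\tilde{P}_{J''}$ with $J'' = (j_2,\ldots,j_r)$. The technical core of the proof is then the single-factor identity, valid for $u > k$:
\begin{align*}
\Big(z_1 - \sum_{k<t\leq n}\gamma_t\Big)(z_u+\gamma_u) = z_u\Big(z_1 - \sum_{k<t\leq n}\gamma_t\Big) + z_u\gamma_u + \sum_{1<t\leq k}\gamma_t\gamma_u - \sum_{u<t\leq n}\gamma_u\gamma_t.
\end{align*}
I would derive this by expanding the left-hand side and applying (\ref{gamma5}) term by term. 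The $\gamma_u z_u + \gamma_u^2$ produced by case 2 equals $(z_1 - \sum_{1<t<u}\gamma_t)\gamma_u$, whose $z_1\gamma_u$ cancels the one coming from $z_1(z_u+\gamma_u)$; splitting the remaining sum at $t=k$ isolates the $\sum_{1<t\leq k}\gamma_t\gamma_u$ correction. The non-standard cross terms $\gamma_{tu}\gamma_t$ produced by case 1 (for $k<t<u$) collapse against the corresponding $\gamma_t\gamma_u$ terms by virtue of the identity $\gamma_{tu}\gamma_t = \gamma_t\gamma_u$ for distinct $t,u>1$, which is a direct calculation in the wreath product combining (\ref{gamma2}) (conjugating $\gamma_{1u}$ by the $S_n$-part of $\gamma_{tu}$ yields $\gamma_{1t}$) with the commutativity of the abelian $g$-factors.

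Multiplying the single-factor identity on the right by $\tilde{P}_{J''}$ with $u = j_1$ and summing over $J$ with $k<j_1<\dots<j_r\leq n$, I would rewrite the principal term as $z_{j_1}[(z_1 - \sum_{t>j_1}\gamma_t) - \sum_{k<t\leq j_1}\gamma_t]\tilde{P}_{J''}$ and apply the inductive hypothesis to the first piece (at rank $r-1$, with $k$ replaced by $j_1$, which is legitimate since $J''$ has entries in $(j_1,n]$). Reassembling the output: the $z_1 z_J$ sum on the right-hand side arises from $z_{j_1}$ multiplied by the $z_1 z_{J''}$ piece of the IH; the $-\sum P_{j_1,\ldots,j_{r+1}}$ sum arises from $-z_{j_1}P_{J''(r)}$ via the decomposition (\ref{Peqn0}), which rewrites $z_{j_1}P_{J''(r)} = P_{j_1,J''(r)} - \gamma_{j_1}P_{J''(r)} - \gamma_{j_1}z_{J''(r)}$, together with the accumulated $-\sum_{j_1<t}\gamma_{j_1}\gamma_t\tilde{P}_{J''}$ corrections from the single-factor identity; and the $+\sum_{1<t\leq k}\gamma_t P_J$ sum arises from the $\sum_{1<t\leq k}\gamma_t\gamma_{j_1}\tilde{P}_{J''}$ correction combined with the $+\gamma_t P$ piece of the IH.

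The main obstacle will be the combinatorial bookkeeping in this final reassembly — specifically, verifying that the extra terms $-z_{j_1}\sum_{k<t\leq j_1}\gamma_t\tilde{P}_{J''}$ and the $\sum_{1<t\leq j_1}\gamma_t P_{J''}$ piece from the IH combine, via (\ref{Peqn0}), to produce exactly the desired $\sum_{1<t\leq k}\gamma_t P_J$ on the right-hand side, with the "boundary" pieces $k<t\leq j_1$ being absorbed into $P_J$ through the recursion. The identity $\gamma_{tu}\gamma_t = \gamma_t\gamma_u$ is the essential structural input that ensures no non-standard $\gamma$-symbols survive to the final expression.
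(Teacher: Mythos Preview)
Your approach is essentially the paper's: induction on $r$, peel off the leftmost factor of $\tilde P$ via (\ref{tildePeqn}), establish a single-factor commutation identity equivalent to the paper's (\ref{induction1}), apply the inductive hypothesis with $k$ replaced by $j_1$, and reassemble using (\ref{Peqn0}). Two minor differences are worth noting. First, your derivation of the single-factor identity routes through (\ref{gamma5}) case~1 and then invokes $\gamma_{tu}\gamma_t=\gamma_t\gamma_u$ to eliminate the $\gamma_{tu}$; the paper simply expands $\gamma_t(z_j+\gamma_j)=\gamma_t z_j+\gamma_t\gamma_j$ for $t<j$ and never produces a $\gamma_{tu}$, so that identity (though true) is unnecessary. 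Second, and more consequentially, you write the single-factor identity with $z_{j_1}$ on the left of $(z_1-\sum\gamma_t)$, whereas the paper rewrites it with $(z_{j_1}+\gamma_{j_1})$ on the left. The paper's form matches the recursion (\ref{tildePeqn}) and (\ref{Peqn0}) directly and lets the identity (\ref{useful}) absorb the awkward term $(z_{j_1}+\gamma_{j_1})\sum_{1<t\le j_1}\gamma_t$ cleanly; your form forces you to track the extra pieces $z_{j_1}\gamma_{j_1}\tilde P_{J''}$ and $-\sum_{t>j_1}\gamma_{j_1}\gamma_t\tilde P_{J''}$ separately, and to commute $z_{j_1}$ past the $\gamma_t$ in the IH output (harmless for $t<j_1$, but the $t=j_1$ term needs its own treatment). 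The cancellation does close up, but the paper's arrangement is tidier.
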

\begin{proof}
We prove this by induction on $r$. We begin with the case $r=1$. Let
$1\leq k < j<n$. Then by (\ref{gamma5}),
\begin{align}
(z_1 - \sum_{k < t \leq n} \gamma_t)(z_j + \gamma_j) &= z_1 z_j +
z_1 \gamma_j - \sum_{k < t < j} \gamma_t (z_j + \gamma_j) - z_1
\gamma_j + \sum_{1 < t <j} \gamma_t \gamma_j - \sum_{j < t \leq n}
(z_j + \gamma_j)\gamma_t \notag \\ &= z_1 z_j - \sum_{k < t < j}
\gamma_t z_j + \sum_{1 < t \leq k} \gamma_t \gamma_j - \sum_{j < t
\leq n} (z_j + \gamma_j)\gamma_t.\label{induction1}
\end{align}
Thus, \begin{align} \sum_{k < j \leq n} (z_1 - \sum_{k < t \leq n}
\gamma_t)&(z_j + \gamma_j) \notag \\&= \sum_{k < j \leq n} z_1 z_j -
\sum_{k < j \leq n} \left( \sum_{k < t < j} \gamma_t z_j + \sum_{j <
t \leq n} (z_j + \gamma_j)\gamma_t\right) + \sum_{k < j \leq
n}\sum_{1 < t \leq k} \gamma_t \gamma_j \notag \\ &= \sum_{k < j
\leq n} z_1 z_j - \sum_{k < j_1 < j_2 \leq n} P_{j_1, j_2} + \sum_{k
< j \leq n}\sum_{1 < t \leq k} \gamma_t P_j,\label{induction1a}
\end{align} where (\ref{induction1a}) follows from (\ref{Peqn0}).

For the induction step, we first apply (\ref{induction1}) to obtain
\begin{align*}
\sum_{k < j_1 < \dots < j_r \leq n} (z_1 &- \sum_{k < t \leq n}
\gamma_t) \tilde{P}_{j_1, \dots ,j_r}\\ &= \sum_{k < j_1 < \dots <
j_r \leq n} (z_1 - \sum_{k < t \leq n} \gamma_t)(z_{j_1} +
\gamma_{j_1}) \tilde{P}_{j_2, \dots ,j_r}\\ &= \sum_{k < j_1 < \dots
< j_r \leq n} \big( (z_{j_1} + \gamma_{j_1}) (z_1 - \sum_{j_1 < t
\leq n} \gamma_t) - \gamma_{j_1}z_1 - \sum_{k < t < j_1} \gamma_t
z_{j_1} + \sum_{1 < t \leq k} \gamma_t \gamma_{j_1} \big)
\tilde{P}_{j_2, \dots ,j_r}.
\end{align*} By induction, this equals \begin{align*}  \sum_{k < j_1 < \dots <
j_r \leq n} ( - \gamma_{j_1}z_1 &- \sum_{k < t < j_1} \gamma_t
z_{j_1} + \sum_{1 < t \leq k} \gamma_t \gamma_{j_1} )
\tilde{P}_{j_2, \dots ,j_r}
\\ &+ \sum_{k < j_1 < \dots < j_r \leq
n} (z_{j_1} + \gamma_{j_1}) \big( z_1 z_{j_2} \dots z_{j_{r}} -
\sum_{j_r < j_{r+1} \leq n} {P}_{j_2, \dots, j_{r+1}} + \sum_{1 < t
\leq j_1} \gamma_t P_{j_2, \dots ,j_r} \big).\end{align*} By
applying (\ref{Pver1}) and rearranging terms we obtain
\begin{align}
\sum_{k < j_1 < \dots < j_r \leq n} ( - \gamma_{j_1}z_1 - \sum_{k <
t < j_1} \gamma_t z_{j_1} &+ \sum_{1 < t \leq k} \gamma_t
\gamma_{j_1} + (z_{j_1} + \gamma_{j_1})z_1) z_{j_2} \dots z_{j_r} \notag \\
+ \sum_{k < j_1 < \dots < j_r \leq n} ( - \gamma_{j_1}z_1 - \sum_{k
< t < j_1} \gamma_t z_{j_1} + \sum_{1 < t \leq k} \gamma_t
\gamma_{j_1} ) &{P}_{j_2, \dots ,j_r} \notag  - \sum_{k < j_1 <
\dots < j_r < j_{r+1} \leq n} (z_{j_1} + \gamma_{j_1}) {P}_{j_2,
\dots, j_{r+1}}\\ +  \sum_{k < j_1 < \dots < j_r \leq
n}(\gamma_{j_1} z_1 &+ \sum_{1 < t < {j_1}} \gamma_tz_{j_1}) P_{j_2,
\dots ,j_r} \label{inductionmain} ,
\end{align} where line (\ref{inductionmain})
follows from (\ref{useful}). We now cancel terms to obtain
\begin{align*}
\sum_{k < j_1 < \dots < j_r \leq n} ( - \sum_{k < t < j_1} \gamma_t
z_{j_1} + \sum_{1 < t \leq k} \gamma_t \gamma_{j_1} + z_{j_1}z_1)
z_{j_2}& \dots z_{j_r}- \sum_{k < j_1  < \dots < j_r < j_{r+1} \leq
n}
(z_{j_1} + \gamma_{j_1}) {P}_{j_2, \dots, j_{r+1}}\\
+ \sum_{k < j_1 < \dots < j_r \leq n} (\sum_{1 < t \leq k}
\gamma_tz_{j_1} &+ \sum_{1 < t \leq k} \gamma_t \gamma_{j_1} )
{P}_{j_2, \dots ,j_r}.
\end{align*}
By rearranging terms and rewriting the summation indices, we obtain
\begin{align*}
\sum_{k < j_1 < \dots < j_r \leq n} z_{1}z_{j_1}& z_{j_2} \dots
z_{j_r}  - \sum_{k < j_1 < \dots < j_r < j_{r+1} \leq n} \big(
\gamma_{j_1} z_{j_2} \dots z_{j_{r+1}}
 + (z_{j_1} + \gamma_{j_1}) {P}_{j_2, \dots, j_{r+1}} \big) \\&+ \sum_{k < j_1
< j_2 < \dots < j_r \leq n} \sum_{1 < t \leq k} \gamma_t \big(
\gamma_{j_1} z_{j_2} \dots z_{j_r} + (z_{j_1} + \gamma_{j_1})
P_{j_2, \dots ,j_r}  \big) \big).
\end{align*}
Now by (\ref{Peqn0}), this equals
\begin{align*}
\sum_{k < j_1 < \dots < j_r \leq n} z_1 z_{j_1}  \dots z_{j_{r}} -
\sum_{k < j_1 < \dots < j_r < j_{r+1} \leq n} {P}_{j_1, \dots,
j_{r+1}} + \sum_{k < j_1 < \dots < j_r \leq n}\sum_{1 < t \leq k}
\gamma_t P_{j_1, \dots ,j_r},
\end{align*}
as required.
\end{proof}

\subsection{Proof of Theorem \ref{centralthm}} Recall that we want
to show that $[x_1, \mathfrak{S}_r] \in \underline{t}H_{R}$ for any
$r$.

We calculate
\begin{align} [x_1, \mathfrak{S}_r] &= [x_1, \sum_{1\leq j_1 < j_2 < \dots < j_r \leq n}
z_{j_1} \dots z_{j_r}] \notag \\&= [x_1, \sum_{1 = j_1 < j_2 < \dots
< j_r \leq n} z_{j_1} \dots z_{j_r}] + [x_1,
\sum_{1 < j_1 < j_2 < \dots < j_r \leq n} z_{j_1} \dots z_{j_r}] \notag \\
&= \sum_{1 < j_2 < \dots < j_r \leq n} \left( [x_1,z_1] z_{j_2}
\dots z_{j_r} + z_1 [x_1, z_{j_2} \dots z_{j_r}] \right) + \sum_{1 <
j_1 < j_2 < \dots < j_r \leq n} P_{j_1, \dots ,j_r}x_1 \label{mainproof1}\\
&= \sum_{1 < j_2 < \dots < j_r \leq n} [x_1,z_1] z_{j_2} \dots
z_{j_r} + \sum_{1 < j_2 < \dots < j_r \leq n}z_1 P_{j_2, \dots
,j_r}x_1 + \sum_{1 < j_1 < j_2 < \dots < j_r \leq n} P_{j_1, \dots
,j_r}x_1. \label{mainproof2}
\end{align} The lines (\ref{mainproof1}) and (\ref{mainproof2})
follow from Lemma \ref{commrelations}.

By (\ref{zcomm}) and Lemma \ref{commrelations},
\begin{align*} \sum_{1 < j_2 < \dots < j_r \leq n} [x_1,z_1] z_{j_2} \dots
z_{j_r} &= \sum_{1 < j_2 < \dots < j_r \leq n} -\underline{t} x_1
z_{j_2} \dots z_{j_r} - \sum_{1 < j_2 < \dots
< j_r \leq n} \sum_{1 < t \leq n} \gamma_{t} x_1 z_{j_2} \dots z_{j_r}\\
&= \sum_{1 < j_2 < \dots < j_r \leq n} -\underline{t} x_1 z_{j_2}
\dots z_{j_r} - \sum_{1 < j_2 < \dots < j_r \leq n} \sum_{1 < t \leq
n} \gamma_{t}\tilde{P}_{j_2, \dots ,j_r}x_1.
\end{align*} Plugging this back into (\ref{mainproof2}), it suffices to show that
\begin{align}\label{mainproof3} \sum_{1 < j_2 < \dots < j_r \leq n} \sum_{1 < t \leq
n} - \gamma_{t} \tilde{P}_{j_2, \dots ,j_r} + \sum_{1 < j_2 < \dots
< j_r \leq n}z_1 P_{j_2, \dots ,j_r} + \sum_{1 < j_1 < j_2 < \dots <
j_r \leq n} P_{j_1, \dots ,j_r} = 0.
\end{align} Equivalently, we want to show that
\begin{align}\label{mainproof4} \sum_{1 < j_2 < \dots < j_r \leq n}(z_1 - \sum_{1 < t \leq
n} \gamma_{t}) \tilde{P}_{j_2, \dots ,j_r} - \sum_{1 < j_2 < \dots <
j_r \leq n}z_1 z_{j_2} \dots z_{j_r} + \sum_{1 < j_1 < j_2 < \dots <
j_r \leq n} P_{j_1, \dots ,j_r} =0.
\end{align} This is the statement of Proposition \ref{greatprop} (in
the case $k=1$).

\subsection{An alternative proof of Theorem \ref{centralthm}}\label{Jackproof}
We sketch now a proof of Theorem \ref{centralthm} using the theory
of Jack polynomials. We use freely the notation from \cite{Gr1}. Let
$W=G(m,1,n)$ and let $H_{R}=H_{R}(W)$. Let $\blambda = ((n),
\emptyset, \dots ,\emptyset)$ be the $m$-multipartition
corresponding to the trivial representation of $W$, see
\ref{irrepsW} below. Then $H_R$ has the standard polynomial
representation $\Delta_{R}(\blambda) = S_R(V^*)$. It is a faithful
representation of $H_R$. Indeed, if we consider $\Delta_K(\blambda)
:= K \otimes_R H_R$, which contains $H_R$ as a subring, then we
obtain a representation $K \otimes_R \Delta(\blambda)$ of $H_K$, and
this has a non-degenerate contravariant form. Therefore
$\Delta_K(\blambda)$ and also $\Delta_R(\blambda)$ are faithful.

There is a basis $f_\mu$ of $\Delta_K(\blambda)$ consisting of
non-symmetric Jack polynomials, where $\mu \in \Z^n$; see
\cite[Section 6]{Gr1}. The Jack polynomials do not have poles at
$\underline{t} = 0$. So in calculations involving $f_\mu$ it makes
sense to specialize $\underline{t}=0$ (this is not necessarily true
for the polynomials $f_{\mu,T}$ in other standard modules,
\cite{Gr2}).

In the notation of the proof of Lemma \ref{HeckeRelations}, we have
\begin{align*}z_i& =\tilde{z}_i -\frac{1}{2}\underline{t} - \sum_{l=1}^{m-1} \eta^{-l} \underline{c}_l g_i^l\\
&=\tilde{z}_i -\frac{1}{2}\underline{t} -\sum_{0 \leq l \leq m-1}
d_{l-1} \pi_{i,l},\end{align*} where the idempotents $\pi_{i,l}$ are
given by
\[\pi_{i,l}=\frac{1}{m} \sum_{0 \leq j \leq m-1} \eta^{-jl} g_i^j,\] and
\[ d_{l-1} = \sum_{k=1}^{m-1} \eta^{k(l-1)}\underline{c}_k. \]  Then by \cite[Theorem 6.1]{Gr1},
$$\pi_{i,l} f_\mu=\begin{cases} f_\mu \quad \hbox{if $l=-\mu_i$ and} \\ 0 \quad \hbox{else.}
\end{cases}$$

It follows from the above and \cite[Theorem 6.1]{Gr1} that $z_i$ acts on $f_\mu$ as
\begin{equation}
{z}_i.f_\mu=\left(\underline{t}(\mu_i+ \frac{1}{2})-d_0-m
(w_\mu(i)-1) \underline{\kappa} \right) f_\mu.
\end{equation} Working modulo $\underline{t}$, for any
$\mu,\nu \in \ZZ_{\geq 0}^n$ we have an equality of multisets:
$$\{\underline{t}(\mu_i+\frac{1}{2})-d_0-m (w_\mu(i)-1) \underline{\kappa} \}_{1 \leq i \leq n}=\{ \underline{t}(\nu_i+\frac{1}{2})-d_0-m (w_\nu(i)-1) \underline{\kappa} \}_{1 \leq i \leq n} \ \mathrm{mod} \ \underline{t}.$$

Therefore for any symmetric polynomial $f=f({z}_1,\dots,{z}_n)$ in
the operators ${z}_1,\dots,{z}_n$ the eigenvalues by which $f$ acts
on $f_\mu$ and $f_\nu$ are equal modulo $\underline{t}$. Applying
this with $\nu=\phi.\mu$ and $\nu=\phi^{-1}.\mu$ where $\phi$ is
defined in equation~(5.6) of \cite{Gr1}, it follows that
\begin{equation}
[\Phi,f].f_\mu=0 \ \mathrm{mod} \ \underline{t} \quad \text{and}
\quad [\Psi,f].f_\mu=0 \ \mathrm{mod} \ \underline{t} \quad
\hbox{for all $\mu \in \ZZ_{\geq 0}^n$.}
\end{equation}  Each monomial $x^\mu$ is an linear combination
of $f_\mu$, so we have also
\begin{equation}
[\Phi,f].x^\mu=0 \ \mathrm{mod} \ \underline{t} \quad
\text{and} \quad [\Psi,f].x^\mu=0 \ \mathrm{mod} \ \underline{t}
\quad \hbox{for all $\mu \in \ZZ_{\geq 0}^n$.}
\end{equation}  Since the polynomial representation is
faithful the relations $[\Phi,f]=[\Psi,f]=0 \ \mathrm{mod} \
\underline{t}$ hold in $H_{R}$.  But $w f=f w$ for all $w \in W$ by
\cite[Lemma 5.1]{Gr1}, so since $H_R$ is generated by $\Phi$,
$\Psi$, and $W$ it follows that $f$ becomes central upon setting
$\underline{t}=0$.

\medskip

\noindent \textbf{Remark.}  More generally, $z_i$ acts on the basis
$\{f_{\mu,T} \}$ of the standard module $\Delta_{K}(\blambda')$
constructed in \cite{Gr2} by
\begin{equation}
z_i.f_{\mu,T}=\left(\underline{t}(\mu_i+1)-d_{\beta(T^{-1}
w_\mu(i))}-m \mathrm{ct}(T^{-1} w_\mu(i)) \underline{\kappa} \right)
f_{\mu,T}.
\end{equation}  Setting $\underline{t}=0$ gives the eigenvalues by
which symmetric polynomials in the $z_i$'s will act on the baby
Verma module for $\blambda'$, even though $f_{\mu,T}$ may have a
pole at $\underline{t}=0$. This is written out in \ref{Zchar}.

\section{Blocks of $\overline{H}_c$ for $G(m,d,n)$}\label{blockresults}

\subsection{Jucys-Murphy elements for wreath products} Let
$W=G(m,1,n)$. We recall a generalisation of the beautiful
construction of representations of the symmetric group described in
\cite{OV}, which is explained in \cite{P}. Let $GZ_n$ be the
subalgebra of $\C W$ generated by elements $g_j^l$ for $1 \leq j
\leq n$ and $0\leq l \leq m-1$, and
\[u_i := \sum_{l=0}^{m-1} \sum_{1 \leq j < i} s_{ij} g_i^{-l} g_j^{l}\] for $1 \leq i \leq n$.
The algebra $GZ_n$ is a maximal commutative subalgebra of $\C W$ which acts
diagonally on the irreducible representations of $W$.

\subsection{Combinatorics}\label{combinatorics} To describe the eigenvalues of
$GZ_n$ on irreducible representations we need to introduce some
combinatorics, we follow the conventions of \cite[Section 3]{M}. We
denote by $\mathcal{P}(m,n)$ the set of $m$-multipartitions of $n$,
$\mathcal{P}(m,n) := \{ (\lambda^0, \dots ,\lambda^{m-1}):
\sum_{i=0}^{m-1} |\lambda^i| = n\}$. We will often identify a
mulitpartition with its $m$-tuple of Young diagrams. Given an
$m$-multipartition $\blambda = (\lambda^0, \dots ,\lambda^{m-1})$ we
define a \textit{standard tableau on} $\blambda$ to be a numbering
of the Young diagram of $\blambda$ with $\{ 1, \dots n\}$ such that
the numbers in the Young diagram corresponding to each $\lambda^i$
are row increasing and column increasing. One can think of a tableau
as a map $T: \{1, \dots, n\} \to \mathrm{Boxes\ of}\ \blambda.$ For
example, this is a standard tableau on the multipartition $\big(
(3,3), (2,1,1) \big)$:
\begin{align*} \Bigg( \ \Yvcentermath1 \young(\x \xxx \xxxxx,\xxxx
\xxxxxxxxx \xxxxxxxxxx)\ ,\ \young(\xx \xxxxxxxx,\xxxxxx,\xxxxxxx)\
\Bigg).
\end{align*}

\begin{comment} The \textit{regular tableau} on a partition $\lambda$ is the
tableau which assigns $1, 2, \dots , \lambda_1$ to the first row of
$\lambda$, $\lambda_1 + 1, \dots ,\lambda_2$ to the second row and
so on. The regular tableau on $\blambda$ is obtained by assigning
the regular tableaux to each $\lambda^i$ in order. The regular
tableau on $\left( (3,3), (2,1,1) \right)$ is
\[ \big( \young(\x \xx \xxx,\xxxx \xxxxx \xxxxxx)\ ,\ \young(\xxxxxxx \xxxxxxxx,\xxxxxxxxx,\xxxxxxxxxx) \big). \]
\end{comment}

Let $b$ be a box in $\blambda$. We denote by $\beta(b)$ the $i$ such
that $b$ lies in $\lambda^i$. The \textit{content} of a box $b$ in
$\blambda$ is the number $ct(b) := j-i$, where $\beta(b)=k$ for some
$k$ and $b$ lies in column $j$ and row $i$ of $\lambda^k$. The
\textit{residue} of a partition $\lambda^i$ is the polynomial
$\mathrm{Res}_{\lambda^i}(x) := \sum_{b \in \lambda^i}
x^{\mathrm{ct}(b)}$.

\subsection{The irreducible representations of $G(m,1,n)$}\label{irrepsW}
The irreducible representations of $W$ are labelled by the set
$\mathcal{P}(m,n)$. To each $\blambda \in \mathcal{P}(m,n)$ we
denote by the $V_{\blambda}$ the corresponding representation. By
\cite[Theorem 9]{P}, there exists a basis $\{ v_T:\ \mathrm{T\ is\
a\ standard\ tableau\ on\ \blambda} \}$ of $V_{\blambda}$ such that
the $v_T$ are eigenvectors for $GZ_n$. Let us describe the
eigenvalues. Let $T$ be a standard tableau on $\blambda$ and let
$\beta_T(i) = \beta(T(i))$. Then $u_i$ and $g_j^l$ act on $v_T$ by
scalars $m. ct(T(i))$ and $\eta^{l \beta_T(i)}$, respectively.

\subsection{Characters of $\mathfrak{Z}_{c}$}\label{Zchar} Let ${c}=
(0,\kappa, c_1, \dots ,c_{m-1}) \in \C^{m+1}$. Recall the notation
of \ref{RRCA} and \ref{babyVs}. We calculate the character of
$\mathfrak{Z}_{{c}} := {R}/m_{(0,c)} \otimes_{R} \mathfrak{Z}_{R}$
on a baby Verma module $\overline{\Delta}_{c}(\blambda)$. Let us
first note that $\mathfrak{Z}_{{c }}$ acts on $\overline{\Delta}_{{c
}}(\blambda)$ by a scalar. This is because $\mathfrak{Z}_{{c}}$ acts
on the subspace $1 \otimes V_{\blambda} \subset
\overline{\Delta}_{{c}}(\blambda)$, and since $\mathfrak{Z}_{{c}}$
is central (and in particular commutes with $W$) its acts on $1
\otimes V_{\blambda}$, and therefore also on
$\overline{\Delta}_{{c}}(\blambda)$, via a scalar.

Let $T$ be a standard tableau on $\blambda$. Since $y_i$ annhilates
the subspace $1 \otimes V_{\blambda}$, $z_i \in \mathfrak{Z}_{c}$
acts on $1 \otimes V_{\blambda}$ via the element \[ - {{\kappa}}
\sum_{l=0}^{m-1} \sum_{i < j} s_{ij} g_i^{-l} g_j^l -
\sum_{l=1}^{m-1} {{c}}_l g_i^l .\] This equals \[ w_0
(-\underline{\kappa}u_{n-i+1} - \sum_{l=1}^{m-1} {{c}}_l
g_{n-i+1}^l)w_0,\] where $w_0 \in S_n$ denotes the longest element.
Thus the set of $\{ w_0 \cdot (1 \otimes v_T)\}$ forms a basis of
eigenvectors for the action of the $z_i$. By \ref{irrepsW},
$z_{n-i+1}$ acts on $w_0 \cdot (1 \otimes  v_T)$ via the eigenvalue
\begin{align}\label{evaluesVermas} - {{\kappa}} m. {ct}(T(i)) -
\sum_{l=1}^{m-1} {{c}}_l \eta^{l \beta_{T}(i)}.\end{align} Recall
the parameters ${H}_i$ from \ref{wreathcherednik} and the formula
relating the ${c}_i$ and ${H}_i$, (\ref{parameters}).
\begin{comment}We have
\begin{align*} -{c}_l(1-\eta^{-l}) = \sum_{j=0}^{m-1} \eta^{-lj}H_j.
\end{align*}\end{comment}
Let us fix $i$ and set $\beta = \beta_{T}(i)$, then
\begin{align} - \sum_{l=1}^{m-1} {c}_l \eta^{\beta l} = \sum_{l=1}^{m-1} \sum_{j=0}^{m-1}
\eta^{\beta l} \frac{\eta^{-lj}}{(1-\eta^{-l})} {H}_j & =
\sum_{j=1}^{m-1} \sum_{l=1}^{m-1} \eta^{\beta l}
\frac{(\eta^{-lj}-1)}{(1-\eta^{-l})} {H}_j\notag \\ &=
-\sum_{j=1}^{m-1} \left( \sum_{l=1}^{m-1} \eta^{\beta l} (1 +
\eta^{-l} + \dots + \eta^{-l(j-1)}) \right) {H}_j\notag \\
&= \sum_{j=1}^{\beta} j
{H}_j + \sum_{j=\beta + 1}^{m-1} (j - m) {H}_j\notag \\
& = \sum_{j=1}^{m-1} (j-m) {H}_j + \sum_{j=1}^{\beta} m
{H}_j.\label{parametercalc}
\end{align}
Let $C = \sum_{j=1}^{m-1} (j-m) {H}_j$, and let ${a} = ( 0, {H}_1,
{H}_1 + {H}_2, \dots , {H}_1 + \dots + {H}_{m-1})$.

Since $\mathfrak{Z}_{{\bf c }}$ is generated by symmetric
polynomials in the $z_i$, the character of $\mathfrak{Z}_{{c}}$ on
$\overline{\Delta}_c(\blambda)$ is given by the unordered $n$-tuple
of the eigenvalues from (\ref{evaluesVermas}). It is convenient to
replace the term $- \sum_{l=1}^{m-1} {c}_l \eta^{\beta l}$ with
(\ref{parametercalc}) and to express the $n$-tuple using
polynomials:
\begin{align}\label{residueschar} \sum_{i=1}^n
 x^{\sum_{l=1}^{m-1} -{c}_l \eta^{\beta_{T(i)}l}}
 x^{- {\kappa} m ct(T(i))} = x^C \sum_{j=0}^{m-1} x^{m{a}_j} \Res_{\lambda^j} (x^{-m {\kappa} }). \end{align}
%Since $-\underline{\kappa}$ is a unit in $\hat{Q}_c$,
%
Finally, (\ref{residueschar}) is uniquely determined by the
expression: \begin{align}\label{residuechar2} \sum_{j=0}^{m-1}
x^{{a}_j} \Res_{\lambda^j} (x^{{-\kappa}}).
\end{align}

\subsection{Blocks for wreath products}\label{wreathblocks}

\begin{comment}from \ref{RRCA} that we have a modular system
\begin{align*} \xymatrix{ \overline{H}_{\hat{(R_0)}_c} \ar@{^{(}->}[r] \ar@{->>}[d] &
\overline{H}_{\hat{Q}_{c}}
\\ \overline{H}_{c}. & } \end{align*} We apply this to the
question of blocks.\end{comment}

\begin{thm}
Let $c = (0,\kappa, c_1, \dots ,c_{m-1}) \in \C^{m+1}$ be a
parameter as in \ref{wreathcherednik}. Let $\blambda, \bmu \in
\mathcal{P}(m,n)$. Let $a = (0 , H_1, H_1 + H_2, \dots , H_1 + \dots
+ H_{m-1})$. Then the baby Verma modules
$\overline{\Delta}_c(\blambda), \overline{\Delta}_c(\bmu)$ lie in
the same block if and only if $\sum_{i=0}^{m-1} x^{{a}_i}
\Res_{\lambda^i}(x^{-\kappa}) = \sum_{i=0}^{m-1} x^{{a}_i}
\Res_{\mu^i}(x^{-\kappa})$.
\end{thm}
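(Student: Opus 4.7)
The plan is to combine Theorem \ref{centralthm} with the character calculation from \ref{Zchar}, supplemented by a deformation argument.

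For the forward direction, Theorem \ref{centralthm} ensures that $\mathfrak{Z}_c$ is central in $\overline{H}_c$, hence the tuple of scalars by which the generators $z_i$ act on $\overline{\Delta}_c(\blambda)$ is a block invariant. By the computation in \ref{Zchar}, this tuple is faithfully encoded in the polynomial $\sum_{i=0}^{m-1} x^{a_i}\Res_{\lambda^i}(x^{-\kappa})$, so baby Vermas in a common block must produce equal polynomials.

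For the converse, I would argue that the $\mathfrak{Z}_c$-character is a complete invariant of blocks, by means of a modular system. Let $\OO$ be the localisation of $R$ at the maximal ideal of $(0,c)$, with fraction field $K$, so that $\overline{H}_\OO$ has generic fibre $\overline{H}_K$ and special fibre $\overline{H}_c$. At a generic parameter $\overline{H}_K$ is semisimple, each baby Verma is a distinct simple module forming its own block, and the polynomial invariants are pairwise distinct for different multipartitions (this can be verified by evaluating the combinatorial formula at a suitably generic numerical point). Consequently $\overline{\mathfrak{Z}}_K = Z(\overline{H}_K) \cong K^{|\mathcal{P}(m,n)|}$, with primitive idempotents $e_\blambda$ indexed by $\mathcal{P}(m,n)$. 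Integrality of the eigenvalues over $\OO$ allows one to lift these idempotents to $\overline{\mathfrak{Z}}_\OO$, and upon specialising to $(0,c)$ two of them, $\bar{e}_\blambda$ and $\bar{e}_\bmu$, coincide precisely when the corresponding polynomials agree at $(0,c)$.

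The principal obstacle is showing that the distinct specialised idempotents $\bar{e}_\blambda$ remain primitive in the full centre $Z(\overline{H}_c)$, so that each $\mathfrak{Z}_c$-character class is a single block rather than a union of blocks. Equivalently, $\overline{\mathfrak{Z}}_c$ must surject onto $Z(\overline{H}_c)$ modulo its Jacobson radical. I would attempt to establish this by combining Hensel-type lifting of central idempotents over $\OO$ with the generic equality $\overline{\mathfrak{Z}}_K = Z(\overline{H}_K)$, to conclude that no new primitive central idempotents appear at the special fibre beyond those arising from $\overline{\mathfrak{Z}}_c$. Granted this, the theorem follows immediately from the explicit form of the polynomial invariant given in \ref{Zchar}.
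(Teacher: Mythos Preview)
Your forward direction is fine: centrality of $\mathfrak{Z}_c$ makes its character a block invariant, and \ref{Zchar} translates that character into the stated polynomial. (Minor quibble: the individual $z_i$ are not central, so speak of the $\mathfrak{Z}_c$-character rather than ``the tuple of scalars by which the $z_i$ act''.)

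The converse, however, has a real gap at exactly the point you flag. You need that the image of $\mathfrak{Z}_c$ in $Z(\overline{H}_c)$ detects all primitive central idempotents, and your proposed mechanism does not establish this. Hensel lifting goes the wrong way for your purposes: it lifts idempotents from the residue field up to the complete local ring, whereas you are trying to descend information from the generic fibre. Knowing that $\overline{\mathfrak{Z}}_K = Z(\overline{H}_K)$ says nothing about whether $\overline{\mathfrak{Z}}_c$ exhausts $Z(\overline{H}_c)$, because centres need not commute with specialisation and block idempotents can genuinely refine at special parameters. So the sentence ``no new primitive central idempotents appear at the special fibre beyond those arising from $\overline{\mathfrak{Z}}_c$'' is precisely what is at stake, and nothing in your sketch proves it.

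The paper takes a different and more direct route that sidesteps this difficulty entirely. It works with the \emph{full} centre $Z_c$ of $H_c$ rather than just $\mathfrak{Z}_c$. Since $\gr Z_c = S[V\oplus V^*]^W$, one can write down explicit homogeneous generators $P_{q,r,s}$ (power sums, via Weyl's theorem) and lift them to homogeneous elements $\tilde P_{q,r,s}\in Z_c$ for the $\Z$-grading on $H_c$. The generators of nonzero degree act nilpotently, hence by zero, on each baby Verma (which is finite-dimensional graded); the degree-zero generators $\tilde P_{q,r,q}$ can be chosen to lie in $\mathfrak{Z}_c$. By M\"uller's theorem the blocks of $\overline{H}_c$ are determined by the $Z_c$-characters on baby Vermas, and the preceding two observations reduce this to the $\mathfrak{Z}_c$-character. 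No deformation is needed, and the question of comparing $\overline{\mathfrak{Z}}_c$ with $Z(\overline{H}_c)$ never arises.
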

\begin{proof}
Recall that there is a a grading on $H_{c}$ where the $y_i$ are in
degree $1$, the $x_i$ in degree $-1$, and $W$ is in degree $0$. Let
$Z_{\bf c }$ denote the centre of $H_{c}$, a graded subalgebra of
$H_c$. Let $f_1, \dots ,f_k$ be a set of homogeneous algebra
generators for $S[V \oplus V^*]^W$. Since $\gr Z_{c} = S[V \oplus
V^*]^W$ is a domain, any lifts $\tilde{f}_i$ of the $f_i$ form a set
of algebra generators for $Z_{c}$. Furthermore, since the filtered
pieces of $Z_{c}$ are graded, \ref{PBW}, we can choose the
$\tilde{f}_i$ to be homogeneous of the same degree as the $f_i$.

We begin by finding a set of homogeneous generators for $S[V \oplus
V^*]^W$. This amounts to finding generators for \[ \left( \C [x_i,
y_j: 1 \leq i,j \leq n]^{C_m^n} \right)^{S_n} = \C [x_i^m, x_jy_j,
y_k^m: 1 \leq i,j,k \leq n]^{S_n}. \] By Weyl's Theorem,
\cite{Weyl}, a generating set is given by the power sums
\begin{align} P_{q,r,s} := \sum_{ 1 \leq i \leq n} (x_i^m)^q
(x_iy_i)^r (y_i^m)^s , \end{align} where $q,r$ and $s$ run through
all values such that $1 \leq q+r+s \leq n$. The degree of
$P_{q,r,s}$ equals $-mq + ms$, so the only degree zero generators in
this set are the $P_{q,r,q}$. A lift of $P_{q,r,q}$ is given by
$\tilde{P}_{q,r,q} := \sum_{1 \leq i \leq n} z_i^{mq + r} \in
\mathfrak{Z}_{{c}}$. Let $\tilde{P}_{q,r,s}$ denote homogeneous
lifts of the remaining generators.

By M\"{u}ller's Theorem, \cite[Theorem III.9.2]{BG}, the blocks of
$\overline{H}_c$ are determined by the characters of $Z_c$ on baby
Verma modules. For any generator $\tilde{P}_{q,r,s}$ of degree not
equal to zero, we know that
$\tilde{P}_{q,r,s}|_{\overline{\Delta}(\blambda)}$ is a nilpotent
operator, since the action of $\tilde{P}_{q,r,s}$ factors through
the finite-dimensional $\Z$-graded algebra $\overline{H}_c$. Thus
$\tilde{P}_{q,r,s}|_{\overline{\Delta}(\blambda)} = 0$. It follows
that $\overline{\Delta}_c(\blambda), \overline{\Delta}_c(\bmu)$ lie
in the same block if and only if the characters of
$\tilde{P}_{q,r,q}$ on these modules are equal. Since
$\C[\tilde{P}_{q,r,q}] = \mathfrak{Z}_{{c}}$, this is the same as
comparing characters for the latter algebra. This was calculated in
the previous section.

\end{proof}

\noindent \textbf{Remark.} If $\kappa \neq 0$, then we can assume
without loss of generality that $\kappa = -1$, see (\ref{shiftiso}).
Thus $\overline{\Delta}_c(\blambda), \overline{\Delta}_c(\bmu)$ lie
in the same block if and only if \[\sum_{i=0}^{m-1} x^{{a}_i}
\Res_{\lambda^i}(x) = \sum_{i=0}^{m-1} x^{{a}_i} \Res_{\mu^i}(x).\]
This is precisely the condition obtained in \cite[Theorem 3.13]{M}
for rational $H_i$.

In the case $\kappa = 0$, we get that $\overline{\Delta}_c(\blambda), \overline{\Delta}_c(\bmu)$ lie in
the same block if and only if $\sum_{i=0}^{m-1} x^{{a}_i}
|\mu^i| = \sum_{i=0}^{m-1} x^{{a}_i}|\lambda^i|$.

\subsection{Irreducible modules for $G(m,d,n)$}\label{normalirreps}

Let $d$ be a positive integer which divides $m$, and let $p=
\frac{m}{d}$. We assume throughout that $n > 2$, or $n=2$ and $d$ is
odd - this avoids degeneration of conjugacy classes below. We
describe a parametrisation of irreducible modules for the normal
subgroup $G(m,d,n)$ of $G(m,1,n)$. See \cite[Section 5]{B} for a
detailed discussion of the group $G(m,d,n)$. Let $\delta$ be a
generator of the cyclic group $C_d$. We define an action of $C_d$ on
$\mathcal{P}(m,n)$ by \bdm \delta \cdot (\lambda^0, \dots,
\lambda^{m-1}) = (\lambda^{m - p}, \lambda^{m+1-p}, \dots
,\lambda^{m-2},\lambda^{m-1},\lambda^0,\lambda^1, \dots,
\lambda^{m-p-1}). \edm Let $C_{\blambda}$ be the stabiliser of
$\blambda$ under this action, and let $\{\blambda \}$ denote the
orbit of $\blambda$. The irreducible representations of $G(m,d,n)$
are parameterized by distinct pairs $(\{ \blambda \},\epsilon)$,
where $\blambda \in \mathcal{P}(m,n)$ and $\epsilon \in
C_{\blambda}$.

The conjugacy classes of reflections in $G(m,d,n)$ are given by:
\begin{align*} \{s_{ij} g_i^{-l} g_j^l: 0 \leq l \leq m-1\ \mathrm{and}\ i\neq j\},
\end{align*} and, for each $1 \leq l \leq p-1$, \begin{align*}\{
g_j^{dl}: 1 \leq j \leq n\}.\end{align*} In particular, a parameter
for the rational Cherednik algebra of $G(m,d,n)$ is given by a tuple
$c' = (0,\kappa, c_d, c_{2d}, \dots ,c_{d(p-1)}) \in \C^{p+1}$. If
we set $c = (0,\kappa, c_1, \dots ,c_{m-1})$, where $c_l = 0$ if $d
\nmid l$, then there is an algebra monomorphism \[ H_{c'}(G(m,d,n))
\hookrightarrow H_c(G(m,1,n)).\]

\subsection{Blocks for $G(m,d,n)$}\label{normalblocks}
Let us write $\blambda = (\underline{\lambda}_0, \dots ,
\underline{\lambda}_{d-1})$ where $\underline{\lambda}_i =
(\lambda^{ip}, \dots , \lambda^{(i+1)p - 1})$. A multipartition
$\blambda$ is called \textit{$d$-stuttering} if
$\underline{\lambda}_i = \underline{\lambda}_j$ for all $0 \le i,j
\le d-1$. The following theorem is \cite[Corollary 6.10]{B}, whose
proof remains valid for arbitrary $c'$ once Theorem
\ref{wreathblocks} is known.

\begin{thm}
Let $c'$ and $c$ be as in \ref{normalirreps}. Let $( \{ \blambda \},
\epsilon) , ( \{ \bmu \}, \eta)$ be pairs labelling irreducible
$G(m,d,n)$-modules. Then:
\begin{itemize}
\item if $\{ \blambda \} \neq \{ \bmu \}$, then $\overline{\Delta}_{c'}( \{ \blambda \}, \epsilon)$ and  $\overline{\Delta}_{c'}( \{ \bmu \}, \eta)$ lie in the same block if and only if
\bdm
\sum_{i=0}^{m-1} x^{{a}_i}
\Res_{\lambda^i}(x^{-\kappa}) = \sum_{i=0}^{m-1} x^{{a}_i}
\Res_{\mu^i}(x^{-\kappa});
\edm
\item  if $\blambda$ is a $d$-stuttering multipartition and $$\sum_{i=0}^{m-1} x^{{a}_i}
\Res_{\lambda^i}(x^{-\kappa}) \neq \sum_{i=0}^{m-1} x^{{a}_i}
\Res_{\mu^i}(x^{-\kappa})$$ for all $\blambda \neq \bmu \in
\mc{P}(m,n)$, then $\overline{\Delta}_{c'}( \{ \blambda \},
\epsilon)$ and  $\overline{\Delta}_{c'}( \{ \blambda \}, \eta)$ are
in the same block if and only if $\epsilon = \eta$;
\item otherwise, $\overline{\Delta}_{c'}( \{ \blambda \}, \epsilon)$ and  $\overline{\Delta}_{c'}( \{ \blambda \}, \eta)$ are in the same block.
\end{itemize}
\end{thm}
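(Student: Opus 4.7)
The plan is to invoke the Clifford-theoretic analysis of [B, Section 6], with Theorem \ref{wreathblocks} supplying the block description of $\overline{H}_c(G(m,1,n))$ in place of the rational-parameter block result used there. Write $W = G(m,1,n)$ and $K = G(m,d,n)$, so that $W/K \cong C_d$. This quotient induces a $C_d$-action on $\mathrm{Irr}(\overline{H}_c(W))$ which coincides with the combinatorial action $\delta \cdot \blambda$ of \ref{normalirreps}, and Clifford theory identifies the baby Verma $\overline{\Delta}_{c'}(\{\blambda\}, \epsilon)$ as the summand of $\overline{\Delta}_c(\blambda)|_K$ corresponding to a character $\epsilon$ of the stabiliser $C_\blambda$.

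First, I would recall the general block-transfer principle from [B]: two $K$-simples lie in the same $\overline{H}_{c'}(K)$-block precisely when, after passing to the associated $W$-baby Vermas, their corresponding $C_d$-orbits are contained in the same $\overline{H}_c(W)$-block, except for an extra splitting that can occur when a $W$-block consists of a single $C_d$-fixed simple with full stabiliser $C_\blambda = C_d$. In that degenerate case only, the $d$ Clifford summands $\{\overline{\Delta}_{c'}(\{\blambda\}, \epsilon)\}_{\epsilon \in \widehat{C_d}}$ lie in $d$ distinct $K$-blocks; in all other cases they lie in a single $K$-block.

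Second, I would translate the block-transfer principle into the three cases of the theorem, feeding in Theorem \ref{wreathblocks} at the appropriate step. In case (a), $\{\blambda\}\neq\{\bmu\}$ means the $W$-simples lie in distinct $C_d$-orbits, so the $K$-block equivalence reduces to the $W$-block equivalence, yielding the residue criterion. In case (b), $\blambda$ being $d$-stuttering implies $C_\blambda = C_d$, and the residue-polynomial hypothesis forces its $W$-block to be the singleton $\{\blambda\}$; this is precisely the degenerate situation above, giving $d$ distinct $K$-blocks labelled by $\epsilon$. In case (c), either $C_\blambda \subsetneq C_d$ or the $W$-block of $\blambda$ contains some $\bmu \neq \blambda$, so no splitting occurs and all $(\{\blambda\}, \epsilon)$ lie in a single $K$-block.

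The only possible obstacle is to verify that the argument in [B] uses the rationality of $c'$ nowhere except through the block description of $\overline{H}_c(W)$. The $C_d$-action on baby Vermas, the Clifford decomposition, and the general block-transfer principle are all parameter-independent algebraic constructions; the $W$-block description is the single input that depends on parameters, and Theorem \ref{wreathblocks} now supplies it in full generality. Thus the proof of [B, Corollary 6.10] carries through unchanged.
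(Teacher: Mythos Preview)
Your proposal is correct and follows exactly the approach of the paper, which simply states that the proof of \cite[Corollary 6.10]{B} remains valid for arbitrary $c'$ once Theorem \ref{wreathblocks} is known. You have spelled out in more detail what that argument involves, but the substance is identical.
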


\bibliographystyle{alphanum}
\bibliography{GradedHecke}

\begin{thebibliography}{GGOR}

\bibitem[Bel]{B}
G.~Bellamy.
\newblock The {C}alogero-{M}oser partition for ${G}(m,d,n)$.
\newblock arXiv:0911.0066, 2010.

\bibitem[BG]{BG}
K.~A. Brown and K.~R. Goodearl.
\newblock {\em Lectures on algebraic quantum groups}.
\newblock Advanced Courses in Mathematics. CRM Barcelona. Birkh\"auser Verlag,
  Basel, 2002.

\bibitem[Bru]{Br}
J.~Brundan.
\newblock Centers of degenerate cyclotomic {H}ecke algebras and parabolic
  category {$\mathcal{O}$}.
\newblock {\em Represent. Theory}, 12:236--259, 2008.

\bibitem[DG]{DGr}
C.~Dunkl and S.~Griffeth.
\newblock Generalized {J}ack polynomials and the representation theory of
  rational {C}herednik algebras.
\newblock arXiv:1002.4607, to appear in Selecta Mathematica, 2010.

\bibitem[DO]{DO}
C.~F. Dunkl and E.~M. Opdam.
\newblock Dunkl operators for complex reflection groups.
\newblock {\em Proc. London Math. Soc. (3)}, 86(1):70--108, 2003.

\bibitem[EG]{EG}
P.~Etingof and V.~Ginzburg.
\newblock Symplectic reflection algebras, {C}alogero-{M}oser space, and
  deformed {H}arish-{C}handra homomorphism.
\newblock {\em Invent. Math.}, 147(2):243--348, 2002.

\bibitem[GGOR]{GGOR}
V.~Ginzburg, N.~Guay, E.~Opdam, and R.~Rouquier.
\newblock On the category {$\mathcal O$} for rational {C}herednik algebras.
\newblock {\em Invent. Math.}, 154(3):617--651, 2003.

\bibitem[GM]{GM}
I.~G. Gordon and M.~Martino.
\newblock Calogero-{M}oser space, restricted rational {C}herednik algebras and
  two-sided cells.
\newblock {\em Math. Res. Lett.}, 16(2):255--262, 2009.

\bibitem[Gor1]{G1}
I.~Gordon.
\newblock Baby {V}erma modules for rational {C}herednik algebras.
\newblock {\em Bull. London Math. Soc.}, 35(3):321--336, 2003.

\bibitem[Gor2]{G2}
I.~G. Gordon.
\newblock Quiver varieties, category {$\mathcal{O}$} for rational {C}herednik
  algebras, and {H}ecke algebras.
\newblock {\em Int. Math. Res. Pap. IMRP}, (3):Art. ID rpn006, 69, 2008.

\bibitem[Gri1]{Gr1}
S.~Griffeth.
\newblock Towards a combinatorial representation theory for the rational
  {C}herednik algebra of type ${G}(r,p,n)$.
\newblock arXiv:math/0612733, to appear in Proceedings of the Edinburgh
  Mathematical Society, 2006.

\bibitem[Gri2]{Gr2}
S.~Griffeth.
\newblock Orthogonal functions generalizing {J}ack polynomials.
\newblock {\em Transactions of the AMS}, 362:6131--6157, 2010.

\bibitem[Mar]{M}
M.~Martino.
\newblock The {C}alogero-{M}oser partition and {R}ouquier families for complex
  reflection groups.
\newblock {\em J. Algebra}, 323(1):193--205, 2010.

\bibitem[OV]{OV}
A.~Okounkov and A.~Vershik.
\newblock A new approach to representation theory of symmetric groups.
\newblock {\em Selecta Math. (N.S.)}, 2(4):581--605, 1996.

\bibitem[Pus]{P}
I.~A. Pushkarev.
\newblock On the theory of representations of the wreath products of finite
  groups and symmetric groups.
\newblock {\em Zap. Nauchn. Sem. S.-Peterburg. Otdel. Mat. Inst. Steklov.
  (POMI)}, 240(Teor. Predst. Din. Sist. Komb. i Algoritm. Metody. 2):229--244,
  294--295, 1997.

\bibitem[RS]{RS}
A.~Ram and A.~Shepler.
\newblock Classification of graded {H}ecke algebras for complex reflection
  groups.
\newblock {\em Comment. Math. Helv.}, 78(2):308--334, 2003.

\bibitem[Wey]{Weyl}
H.~Weyl.
\newblock {\em The classical groups: Their invariants and representations}.
\newblock Princeton Landmarks in Mathematics. Princeton University Press,
  Princeton, NJ, 1946.

\end{thebibliography}

\end{document}